\definecolor{darkblue}{rgb}{0,0,0.6}
\newtheorem{Thm}[subsection]{Theorem}
\newtheorem{Prop}[subsection]{Proposition}
\newtheorem{Cor}[subsection]{Corollary}
\newtheorem{Lemma}[subsection]{Lemma}
\theoremstyle{remark}
\newtheorem{Ex}[subsection]{Example}
\theoremstyle{definition}
\newtheorem{Def}[subsection]{Definition}
\newtheorem*{ackn}{Acknowledgements}
\newtheorem*{Notation}{Notation}
\newtheorem*{Warning}{Warning}
\newtheorem*{Theorem}{Theorem}
\newtheorem{Rmk}[subsection]{Remark}
\theoremstyle{defs2}
\newcommand{\Z}{\mathbb{Z}}
\renewcommand{\L}{\mathscr{L}}
\newcommand{\m}{\mathscr{M}}
\newcommand{\M}{\mathrm{M}}
\newcommand{\lto}{\longrightarrow}
\newcommand{\SF}{\mathrm{SF}}
\newcommand{\BG}{\mathrm{BG}}
\newcommand{\BO}{\mathrm{BO}}
\newcommand{\BPL}{\mathrm{BPL}}
\newcommand{\BTop}{\mathrm{BTop}}
\newcommand{\BSG}{\mathrm{BSG}}
\newcommand{\BSO}{\mathrm{BSO}}
\newcommand{\B}{\mathrm{B}}
\newcommand{\G}{\mathrm{G}}
\newcommand{\PL}{\mathrm{PL}}
\newcommand{\Top}{\mathrm{Top}}
\renewcommand{\o}{\mathrm{O}}
\newcommand{\Hom}{\mathrm{Hom}}
\newcommand{\Sq}{\mathrm{Sq}}
\renewcommand{\l}{\mathrm{L}}
\renewcommand{\H}{\mathrm{H}}
\newcommand{\E}{\mathbb{E}}
\renewcommand{\S}{\mathbb{S}}
\newcommand{\gl}{\mathrm{gl}}
\newcommand{\MSO}{\mathrm{MSO}}
\newcommand{\MSG}{\mathrm{MSG}}
\newcommand{\MSpin}{\mathrm{MSpin}}
\newcommand{\MSGpin}{\mathrm{MSGpin}}
\newcommand{\SO}{\mathrm{SO}}
\newcommand{\Spin}{\mathrm{Spin}}
\renewcommand{\S}{\mathbb{S}}
\newcommand{\Sp}{\mathrm{Sp}}
\newcommand{\F}{\mathscr{F}}
\newcommand{\g}{\mathscr{G}}
\newcommand{\Pic}{\mathrm{Pic}}
\newcommand{\map}{\mathrm{map}}
\newcommand{\Map}{\mathrm{Map}}
\newcommand{\RP}{\mathbb{RP}}
\newcommand{\ks}{\mathrm{ks}}
\newcommand{\Spc}{\mathrm{An}}
\newcommand{\Fun}{\mathrm{Fun}}
\newcommand{\Loc}{\mathscr{L}}
\newcommand{\Orloc}{\mathscr{O}}
\newcommand{\op}{\mathrm{op}}
\newcommand{\id}{\mathrm{id}}
\newcommand{\D}{\mathscr{D}}
\newcommand{\R}{\mathbb{R}}
\renewcommand{\g}{\mathscr{G}}
\newcommand{\Tw}{\mathrm{Tw}}
\newcommand{\Th}{\mathrm{Th}}
\newcommand{\h}{\mathscr{h}}
\newcommand{\CP}{\mathbb{CP}}
\newcommand{\C}{\mathscr{C}}
\newcommand{\fib}{\mathrm{fib}}
\newcommand{\cE}{\mathscr{E}}
\renewcommand{\h}{\mathscr{H}}
\DeclareMathOperator*{\colim}{colim}
\begin{document}

\keywords{Poincar\'e duality spaces, Spivak normal fibration, reducibility}
\subjclass[2010]{55R25, 57P10}

\title{Reducibility of low dimensional Poincar\'e duality spaces}
\date{\today}

\author{Markus Land}
\address{Department of Mathematical Sciences, University of Copenhagen, 2100 Copenhagen, Denmark}
\email{markus.land@math.ku.dk}

\thanks{I am happy to acknowledge the support provided by the CRC 1085 ``Higher invariants'' at the University of Regensburg granted by the DFG and by the Danish National Research Foundation through the Copenhagen Centre for Geometry and Topology (DNRF151).}

\begin{abstract}
We discuss Poincar\'e duality complexes $X$ and the question whether or not their Spivak normal fibration admits a reduction to a vector bundle in the case where the dimension of $X$ is at most 4. We show that in dimensions less than 4 such a reduction always exists, and in dimension 4 such a reduction exists provided $X$ is orientable. In the non-orientable case there are counterexamples to reducibility by Hambleton--Milgram. 
\end{abstract}

\maketitle

\tableofcontents

The purpose of this paper is to collect what is known about the question whether or not a Poincar\'e duality space is reducible, i.e.\ whether its Spivak normal fibration admits a reduction to a vector bundle, focussing on low dimensions. The main result of this paper is the following.

\begin{Theorem}
Let $X$ be a Poincar\'e duality space of dimension $\leq 3$ or an oriented Poincar\'e duality space of dimension 4. Then $X$ is reducible, i.e.\ its Spivak normal fibration admits a vector bundle reduction.
\end{Theorem}

The conclusion of the 4-dimensional part of the theorem is claimed in \cite[page 95 Example (1)]{Spivak} without reference or any indication of a proof. 
The main purpose of this paper, when first written, was to fill this gap. Unfortunately, the first version of this paper contained a gap which I will explain in \cref{Warning:error}. 
When Diarmuid Crowley made me aware of this gap, I had worked out another proof using bordism and L-theoretic arguments which was suggested to me by Larry Taylor. I wish to thank Larry Taylor for encouraging me to write up the argument in this note and apologise for the delay in preparing this manuscript. 

In dimension 2, the above result is a consequence of work of Wall's. In dimension 3, to the best of my knowledge it was first shown in \cite{Cannon-conjecture} and I have since then seen that the result is also stated in \cite{Hillman} without reference. After the incorrect argument of mine appeared, Hambleton gave a proof of the 4-dimensional case of the above theorem in \cite{Hambleton}, based on earlier work of Hambleton--Milgram \cite{HM}. In summary, the main results of this paper are already in the literature, the main purpose of this note is to give an overview of different methods that are available, and to give an alternative proof to Hambleton's in the 4-dimensional case. 
\newline

\subsection*{Motivation -- Wall's Conjecture}
Let $\Gamma$ be an $n$-dimensional Poincar\'e duality group, or PD(n) group for short, i.e.\ the fundamental group of an aspherical PD space $X$ of dimension $n$. Then Wall conjectured that $X$ is homotopy equivalent to a (necessarily aspherical) manifold. In particular, it is conjectured that $X$ is reducible. In contrast to reducibility of general PD spaces, the reducibility of aspherical PD spaces of dimension at least 5 is implied by the Farrell--Jones conjectures in K- and L-theory (for the given group). If they hold, then there is an ANR homology manifold homotopy equivalent to the aspherical PD space in question, as follows from the 4-periodic version of the total surgery obstruction of Ranicki, see \cite[Chapter 25]{Ranicki}. Indeed, the Farrell--Jones conjectures imply that the obstruction group for the existence of such an ANR homology manifold vanishes. Furthermore, \cite[Theorem 16.6]{FP} states that ANR homology manifolds have reducible Spivak fibration. In fact, this was used by Bartels--L\"uck--Weinberger to prove amongst other things that Wall's conjecture is true for hyperbolic groups whose boundary is a sphere (such groups are Poincar\'e duality groups by \cite{BM}), provided the group in question has dimension at least 6, see \cite{BLW}. 

There exist several papers which aim to prove theorems about 3-dimensional PD spaces which are known to hold for 3-manifolds. We first learned of such results and Wall's conjecture in a nice talk of Boileau at the conference ``Manifolds and Groups'' in Regensburg in 2017, and then asked whether it was at least known that every 3-dimensional PD space is reducible. To our surprise, this was not documented in the literature. Soon after, Wolfgang L\"uck found a proof of this statement using the geometric approach we outline in \cref{section3}; in fact, the approach presented there is based on L\"ucks proof of the 3-dimensional case which was used and written up in work of Cappell--L\"uck--Weinberger on a stable version of Cannon's conjecture \cite{Cannon-conjecture}. In discussions with L\"uck, we started talking about the 4-dimensional case, which is how this paper came to life.

\subsubsection*{Organisation of the paper}
In \cref{section1}, we recall Poincar\'e duality spaces and all ingredients necessary to understand the statement of the main theorem. The obstruction theory for vector bundle reductions of spherical fibrations is set up and direct consequences are established. \cref{section2} is devoted to the proof of the main theorem. In \cref{section3}, we rephrase the reducibility question in a more geometric fashion, and explain a different approach to the main theorem (which allows to prove the main result in the case of spin PD spaces).
Finally, in \cref{section4}, we note that the orientability condition in the main theorem cannot be dropped using an example of Hambleton--Milgram and briefly discuss the failure of reducibility in dimension 5 in terms of the geometric method of \cref{section3}.
In the appendix, we explain how Poincar\'e duality spaces can be characterised from a parametrised homotopy theoretic point of view. While surely well-known to homotopy theorists, we thought it might be helpful to add this material here. 
\newline

\begin{ackn}
I thank Wolfgang L\"uck for sharing his insights with me and making me aware of the question. I thank Ian Hambleton for very helpful comments, Larry Taylor and Andrew Ranicki for nice discussions about the topic of this paper, and Diarmuid Crowley for spotting a mistake in an earlier version and discussions about the topic. I also thank Ulrich Bunke, Fabian Hebestreit, and Christoph Schrade for comments on the first version of this paper, and Thomas Nikolaus for discussions about PD complexes in general. Finally, I thank Michael Weiss for very helpful suggestions.
\end{ackn}

\section{Poincar\'e duality spaces}\label{section1}

\subsection{Poincar\'e duality spaces}\label{Section:PD-spaces-classical}
Roughly speaking, a space is called a Poincar\'e duality space if its (co)homology satisfies Poincar\'e duality. 
More precisely, a Poincar\'e duality space of dimension $n$ 
is a space $X$ of the homotopy type of a finite CW complex, such that there exists a pair $(\L,[X])$ where
\begin{enumerate}
\item[-] $\L$ is a local system on $X$ which is point-wise infinite cyclic, called the orientation local system, and 
\item[-] $[X]$ is an element in $\H_n(X;\L)$, called the fundamental class,
\end{enumerate}
satisfying the property that the map 
\[ \H^k(X;\m) \xrightarrow{- \cap [X]} \H_{n-k}(X;\m\otimes\L) \]
is an isomorphism for any local system $\m$ on $X$ and all $k \in \Z$. In particular, the (co)homology of $X$ vanishes in degrees larger than the dimension of $X$. A Poincar\'e duality space equipped with such a pair $(\L,[X])$ will be called $\L$-oriented. From now on, we shall restrict to connected Poincar\'e duality spaces.
Choosing a base point $x_0$ of $X$, the orientation local system $\L$ induces a homomorphism 
\[w_1(X)\colon \pi_1(X,x_0) \lto \mathrm{Aut}(\L_{x_0}) \cong \Z/2\] 
called the induced orientation character and 
the Poincar\'e duality space $X$ is called orientable if $w_1(X) = 0$. This is equivalent to the condition that there exists an isomorphism $\varphi \colon \L \to \Z$ from the orientation local system to the constant local system. For an $\L$-oriented Poincar\'e duality space $X$, the choice of such an isomorphism provides a $\Z$-orientation, and then $X$ is simply called oriented.
In this case, Poincar\'e duality holds with constant coefficients. Reducing coefficients modulo 2, one sees that $\L$ becomes a constant local system with values $\Z/2$, so that $X$ is canonically $\Z/2$-oriented. Thus there is always untwisted Poincar\'e duality with $\Z/2$ coefficients. From here on, we will refer to Poincar\'e duality spaces as PD spaces or PD complexes and sometimes leave an $\L$-orientation implicit.

It is a theorem of Spivak, see \cite{Spivak}, that any PD space $X$ carries a canonical stable spherical fibration, the Spivak normal fibration, whose classifying map we will denote by $\SF(X) \colon X \to \BG$, see \cref{appendix} for a discussion of the Spivak normal fibration from a parametrised homotopy theoretic point of view. Here $\BG$ denotes a classifying space for stable spherical fibrations (of formal dimension $0$). Let us denote by $\M(\xi)$ the Thom spectrum of a stable spherical fibration $\xi$. Then the Spivak normal fibration comes with a \emph{stable collapse map} $\S^n \to \M(\SF(X))$, where $n$ is the dimension of $X$ and $\S^n$ is the $n$-fold suspension of the sphere spectrum $\S$, such that the following holds: The canonical map
\[ \pi_n(\M(\SF(X))) \lto \H_n(\M(\SF(X));\Z) \stackrel{\cong}{\lto} \H_n(X;\L) \]
sends the collapse map to a fundamental class of $X$. In particular, the orientation local system associated to the Spivak normal fibration is identified with $\L$ in order to obtain the last (twisted) Thom isomorphism map.

In fact, Spivak in addition showed that the Spivak normal fibration is uniquely determined in the following sense: If $\xi$ is a spherical fibration over $X$, also equipped with a map $c\colon\S^n \to \M(\xi)$, such that the composite 
\[ \pi_n(\M(\xi)) \lto \H_n(\M(\xi)) \stackrel{\cong}{\lto} \H_n(X;\L) \]
sends the map $c$ to the fundamental class of $X$, then $\xi$ is equivalent to $\SF(X)$, and the equivalence can be chosen to send the collapse map of $\xi$ to that of $\SF(X)$. Even more is true, namely Wall showed that, in the above described situation, there exists a \emph{unique} such equivalence. See \cite[Corollary 3.4 \& Theorem 3.5]{Wall} and \cite[Sections 3.4 \& 10.3]{CLM} for more details and the appendix for a different perspective on this matter. This \emph{universal property} of the Spivak normal fibration can be used to show that closed manifolds are PD spaces and that their Spivak normal fibration is the spherical fibration underlying the stable normal bundle.
The universal property of the Spivak normal fibration will also be revisited in \cref{appendix}.

\begin{Ex}
If $M$ is a closed manifold, then $M$ is an $\hat{\L}$-oriented PD space: Here $\hat{\L}$ refers to the local system induced by the orientation double cover $\hat{M}$ via the bundle of free abelian groups of rank one given by $\hat{M}\times_{\Z/2} \Z$. The fact that the orientation double cover of any closed manifold is canonically oriented implies that $M$ is $\hat{\L}$-oriented. The Spivak normal fibration of $M$ is given by the stable spherical fibration underlying the stable normal bundle $\nu(M)$ of $M$: This follows both from Spivak's construction and also the uniqueness of the Spivak normal fibration: By choosing an embedding $i\colon M \subseteq S^{n+k}$, there is an induced (unstable) collapse map $S^{n+k} \to \mathrm{Th}(\nu(i))$ to the Thom \emph{space} of the normal bundle of this embedding. This unstable collapse map induces a stable collapse map from $\S^n$ to the Thom spectrum of the stable normal bundle. 
\end{Ex}

For a PD space $X$, we will write $w(X) = w(\SF(X))^{-1}$ for the total Stiefel--Whitney class of the inverse of the Spivak normal fibration. For a closed manifold $M$ we get that 
\[ w(M) = w(\SF(M))^{-1} = w(\nu(M))^{-1} = w(\tau(M)) \]
where $\tau(M)$ denotes the tangent bundle of $M$. The definition of Stiefel--Whitney classes for PD spaces is thus a generalisation of the one for manifolds.

\subsection{The Wu--formula}
Let us now recall the Wu classes of an $n$-dimensional PD space $X$.
By Poincar\'e duality with $\Z/2$ coefficients, the canonical map 
\[ \H^k(X;\Z/2) \lto \Hom(\H^{n-k}(X;\Z/2),\Z/2),\]
given by sending $x$ to the homomorphism sending $y$ to $\langle x\cup y, [X] \rangle$ is an isomorphism. The right hand side contains the homomorphism
\[ y \longmapsto \langle \Sq^k(y),[X] \rangle.\]
Thus, for every $k \geq 0$ there is a unique class $v_k(M)$ representing this homomorphism. We denote by $v(M)$ the total Wu class. It is characterised by the equation
\[ \langle \Sq(x),[X] \rangle = \langle v(M)\cup x,[X] \rangle \]
for all $x \in \H^*(X;\Z/2)$. Notice that $v_k(M) = 0$ once $2k$ is larger than $n$.

In fact, one can (and should) define Wu classes for spherical fibrations as follows.
For a spherical fibration $\xi\colon E \to B$, let $\Phi$ denote the Thom isomorphism on mod 2 cohomology. Then one defines 
\[ v_i(\xi) = \Phi^{-1}\big(\chi(\Sq^i)(u)\big) \in \H^i(B;\Z/2)\]
where $u$ is a mod 2 Thom class of $\xi$ and $\chi$ is the antipode of the mod 2 Steenrod algebra. With this definition it is formal to see, using $\chi(\Sq) = \Sq^{-1}$, that there is the equation
\[ \Sq(v(\xi)) = w(\xi)^{-1}.\]
In the universal case, i.e.\ in the cohomology of $\BG$, the element $w(\xi)$ is not invertible, but it becomes invertible when we complete the cohomology at the ideal of positively graded elements. Furthermore, the ordinary mod 2 cohomology injects into the completed cohomology: it is the inclusion of the polynomial ring in the power series ring. For finite CW complexes, the cohomology is complete, so that $w(\xi)$ indeed becomes invertible when pulled back to any finite CW complex.

The following proposition is then known as a Wu--formula, see \cite[Proposition III.3.6]{Browder} for a proof.
\begin{Prop}
Let $X$ be a PD space. Then $v(\SF(X)) = v(X)$ and in particular one finds $\Sq(v(X)) = w(X)$.
\end{Prop}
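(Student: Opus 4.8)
The plan is to establish the first identity $v(\SF(X)) = v(X)$ directly; the displayed formula is then immediate, since combining it with the general formula $\Sq(v(\xi)) = w(\xi)^{-1}$ recorded above (applied to $\xi = \SF(X)$) and with the definition $w(X) = w(\SF(X))^{-1}$ yields $\Sq(v(X)) = \Sq(v(\SF(X))) = w(\SF(X))^{-1} = w(X)$. So everything reduces to reconciling the two \emph{a priori} different definitions of the Wu class. By the nondegeneracy of the mod $2$ Poincaré duality pairing, the class $v(X)$ is the unique one satisfying $\langle \Sq(x),[X]\rangle = \langle v(X)\cup x,[X]\rangle$ for all $x$, so it suffices to verify that $v(\SF(X))$ satisfies this same relation.

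Write $\xi = \SF(X)$, let $u$ be a mod $2$ Thom class, let $\Phi(y) = \pi^*(y)\cup u$ denote the Thom isomorphism, and let $c\colon \S^n \to \M(\xi)$ be the stable collapse map. I would use two inputs. First, that $\chi(\Sq)$ is multiplicative: it is the inverse of the total square $\Sq$, which is a ring map by the Cartan formula, and by definition $\Phi(v(\xi)) = \chi(\Sq)(u)$. Second, the defining property of the collapse map: mod $2$, the class $c_*(\iota)$, where $\iota$ generates $\H_n(\S^n;\Z/2)$, corresponds to $[X]$ under the homology Thom isomorphism, so that $\langle \Phi(y),[\M(\xi)]\rangle = \langle y,[X]\rangle$ for all $y$, where I write $[\M(\xi)] = c_*(\iota)$.

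The computation then runs as follows. Using multiplicativity of $\chi(\Sq)$ and $x = \chi(\Sq)(\Sq(x))$, I rewrite
\[ \langle v(\xi)\cup x,[X]\rangle = \langle \Phi(v(\xi)\cup x),[\M(\xi)]\rangle = \langle \chi(\Sq)(u)\cup \pi^*x,[\M(\xi)]\rangle = \langle \chi(\Sq)\big(u\cup \pi^*\Sq(x)\big),[\M(\xi)]\rangle = \langle \chi(\Sq)(\Phi(\Sq x)),[\M(\xi)]\rangle. \]
It remains to observe that $\chi(\Sq)$ may be discarded inside the pairing against $[\M(\xi)]$. This is the key point and the only place where the geometry enters: by naturality of the Steenrod operations under $c^*$, and since $\H^*(\S^n;\Z/2)$ is concentrated in a single degree (so $\chi(\Sq^i)$ acts as zero on $\H^n(\S^n;\Z/2)$ for $i > 0$), one has $\langle \chi(\Sq)(\alpha),[\M(\xi)]\rangle = \langle \chi(\Sq)(c^*\alpha),\iota\rangle = \langle c^*\alpha,\iota\rangle = \langle \alpha,[\M(\xi)]\rangle$ for every $\alpha \in \H^n(\M(\xi);\Z/2)$. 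Applying this with $\alpha = \Phi(\Sq x)$ gives $\langle v(\xi)\cup x,[X]\rangle = \langle \Phi(\Sq x),[\M(\xi)]\rangle = \langle \Sq x,[X]\rangle$, as required.

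The main obstacle is precisely this last step of dropping $\chi(\Sq)$ against the fundamental class; everything else is the formal bookkeeping of the Thom isomorphism together with the Cartan formula. I expect the only genuine care to be needed in keeping the degree decompositions of the total classes straight — the pairing against $[X]$ extracts the degree-$n$ component throughout — and in justifying that the stable collapse map really does realize $[X]$ mod $2$ through the Thom isomorphism, which is exactly the defining property of $\SF(X)$ recalled in the previous subsection.
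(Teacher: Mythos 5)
Your proof is correct, and it is essentially the standard argument: the paper gives no proof of its own but cites Browder, and Browder's proof is exactly this — reduce, via nondegeneracy of the mod $2$ Poincar\'e duality pairing, to checking that $v(\SF(X))$ satisfies the defining identity of $v(X)$, and then use that the mod $2$ fundamental class of $\M(\SF(X))$ is spherical (i.e.\ equals $c_*(\iota)$ for the collapse map $c\colon \S^n \to \M(\SF(X))$), so that $\chi(\Sq)$ can be dropped when pairing against it. One cosmetic point: your key claim should be stated for arbitrary, not necessarily homogeneous, classes $\alpha$ — for $\alpha$ homogeneous of degree $n$ it is vacuous (the degree-$n$ component of $\chi(\Sq)(\alpha)$ is just $\alpha$), and you apply it to the total class $\Phi(\Sq x)$ — but this costs nothing, since your argument via naturality under $c^*$ and the concentration of $\H^*(\S^n;\Z/2)$ in degree $n$ proves precisely that stronger statement.
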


\subsection{Detecting the Spivak fibration and Reducibility}
We start with a definition. Let $X$ and $Y$ be $\L_X$- and $\L_Y$-oriented PD spaces of dimension $n$. 
\begin{Def}\label{degree 1 maps}
A degree 1 map from $Y$ and $X$ consists of a pair $(f,\alpha)$, where 
\begin{enumerate}
\item[-] $f\colon Y \to X$ is a map, and 
\item[-] $\alpha \colon \L_Y \to f^*(\L_X)$ is an isomorphism,
\end{enumerate}
satisfying the property that the map induced by the pair $(f,\alpha)$
\[ \H_n(Y;\L_Y) \lto \H_n(X;\L_X) \]
sends the fundamental class of $Y$ to that of $X$.
\end{Def}

\begin{Rmk}
Let $X$ and $Y$ be \emph{oriented} PD spaces of dimension $n$. In this case, the isomorphisms $\L_X \cong \Z$ and $\L_Y\cong \Z$, i.e.\ the orientations of $X$ and $Y$, provide a canonical choice for the isomorphism $\alpha$, namely such that $\alpha$ corresponds to the identity of $\Z$. Choosing this isomorphism, a degree 1 map between oriented PD spaces is exactly what one is used to. However, by definition, one may also choose the other isomorphism between $\L_Y$ and $f^*(\L_X)$ that corresponds to multiplication by $-1$, and then degree 1 maps in the sense of \cref{degree 1 maps} correspond to what one usually calls maps of degree $-1$.
\end{Rmk}

We will make use of the following well-known lemma which gives a method of detecting that some spherical fibration over a PD space is in fact the Spivak normal fibration. 

\begin{Lemma}\label{detecting the SF}
Suppose given a pullback diagram
\[\begin{tikzcd} \SF(Y) \ar[r, "\bar{f}"] \ar[d] & \xi \ar[d] \\ Y \ar[r,"f"] & X \end{tikzcd} \]
where $X$ and $Y$ are $n$-dimensional $\L_X$- and $\L_Y$-oriented PD spaces and where $f$ is of degree 1. If $w_1(\xi) = w_1(X)$, then 
the composition 
\[ \S^n \lto \M(\SF(Y)) \xrightarrow{\M(\bar{f})} \M(\xi) \]
exhibits $\xi$ as a Spivak normal fibration of $X$.
\end{Lemma}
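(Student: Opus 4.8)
The plan is to deduce the statement from the uniqueness theorem of Spivak and Wall quoted above: since a stable spherical fibration over $X$ together with a collapse map from $\S^n$ whose image is a fundamental class determines the Spivak data up to unique equivalence, it suffices to verify that the composite
\[ c \colon \S^n \lto \M(\SF(Y)) \xrightarrow{\M(\bar f)} \M(\xi) \]
has the defining property of a Spivak collapse map, namely that under the canonical map
\[ \pi_n(\M(\xi)) \lto \H_n(\M(\xi);\Z) \xrightarrow{\cong} \H_n(X;\L_X) \]
it is sent to the fundamental class $[X]$.

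First I would produce the relevant twisted Thom isomorphisms. The hypothesis $w_1(\xi) = w_1(X) = w_1(\L_X)$ ensures that the orientation local system $\O_\xi$ of $\xi$ is isomorphic to $\L_X$, so that a Thom class $u_\xi$ induces an isomorphism $\Phi_\xi \colon \H_n(\M(\xi);\Z) \xrightarrow{\cong} \H_n(X;\L_X)$; this is exactly the isomorphism appearing in the displayed characterisation. Since $\SF(Y)$ is the Spivak normal fibration of $Y$ we have $\O_{\SF(Y)} \cong \L_Y$, and I would equip $\SF(Y) = f^*\xi$ with the pulled back Thom class $\bar f^* u_\xi$; together with the isomorphism $\alpha \colon \L_Y \to f^*\L_X$ this yields a compatible Thom isomorphism $\Phi_Y \colon \H_n(\M(\SF(Y));\Z) \xrightarrow{\cong} \H_n(Y;\L_Y)$.

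Next I would assemble the naturality square. As $\bar f$ is a bundle map and the two Thom classes are chosen compatibly, naturality of the Thom isomorphism produces a commutative diagram whose top arrow is $\M(\bar f)_*$ on integral homology and whose bottom arrow is precisely the map $\H_n(Y;\L_Y) \to \H_n(X;\L_X)$ induced by $f$ and $\alpha$, i.e.\ the degree 1 datum. Combining this with naturality of the canonical map $\pi_n(-) \to \H_n(-;\Z)$ and chasing the diagram, the image of $c$ equals $(f,\alpha)_*$ applied to the image of the collapse map of $Y$; the latter is $[Y]$ because $\SF(Y)$ is the Spivak fibration of $Y$, and $(f,\alpha)_*[Y] = [X]$ because $f$ is of degree 1. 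Thus $c$ is sent to $[X]$, and Spivak--Wall uniqueness identifies $(\xi,c)$ with the Spivak normal fibration of $X$.

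The genuine difficulty here is orientation bookkeeping rather than any substantial input. One must check that the identifications $\O_\xi \cong \L_X$ and $\O_{\SF(Y)} \cong f^*\L_X \cong \L_Y$ can be arranged coherently with $\alpha$ and $f$, so that the bottom of the naturality square really is the degree 1 map and not that map twisted by a global sign. This is exactly the point at which the hypothesis $w_1(\xi) = w_1(X)$ is indispensable, since without it the integral Thom isomorphism landing in $\H_n(X;\L_X)$ would not be available; and it is also where the sign conventions built into \cref{degree 1 maps} through the choice of $\alpha$ must be tracked, so that the factor relating $\Phi_Y$ of the collapse map to $[Y]$ is exactly $+1$.
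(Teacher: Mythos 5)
Your proof is correct and takes essentially the same approach as the paper's: both verify the Spivak--Wall universal property by showing that the composite $\S^n \to \M(\SF(Y)) \to \M(\xi)$ maps to $[X]$ under the twisted Thom isomorphism (which is where $w_1(\xi) = w_1(X)$ enters), using the naturality square relating the Thom isomorphisms of $\SF(Y)$ and $\xi$ together with the degree 1 hypothesis. Your extra attention to choosing the Thom classes compatibly with $\alpha$ only spells out what the paper's commutative diagram asserts implicitly.
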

\begin{proof}
One simply checks the universal property, i.e.\ that the class corresponding to $[X]$ in $\H_n(\M(\xi);\Z)$ under the Thom isomorphism comes from a stable map $\S^n \to \M(\xi)$ under the Hurewicz homomorphism. Notice that one uses the twisted Thom isomorphism
\[ \H_n(\M(\xi);\Z) \cong \H_n(X;\L_X)\]
and it is here that we use the assumption $w_1(\xi) = w_1(X)$. As indicated in the statement of the lemma, the map $\bar{f}$ induces a map on Thom spectra $\M(\SF(Y)) \to \M(\xi)$ and so one considers the composite
\[ \S^n \lto \M(\SF(Y)) \lto \M(\xi).\]
It has the desired properties because $f$ has degree 1 and the diagram
\[ \begin{tikzcd} \H_k(\M(\SF(Y));\Z) \ar[r,"\M(\bar{f})_*"] \ar[d,"\cong"] & \H_k(\M(\xi);\Z) \ar[d,"\cong"] \\ \H_k(Y;\L_Y) \ar[r,"f_*"] & \H_k(X;\L_X) \end{tikzcd}\]
commutes.
\end{proof}

Recall that for a manifold $M$, the Spivak normal fibration is given by the spherical fibration underlying the stable normal bundle of $M$, i.e.\ the diagram
\[ \begin{tikzcd} & \BO \ar[d,"J"] \\ M \ar[r,"\SF(M)"'] \ar[ur, bend left, "\nu(M)"] & \BG \end{tikzcd}\]
commutes up to homotopy. This motivates the following definition.

\begin{Def}
Let $X$ be a PD space. $X$ is said to be \emph{reducible} if the Spivak normal fibration admits a reduction to a vector bundle, i.e.\ if a dashed arrow in the diagram
\[ \begin{tikzcd} & \BO \ar[d,"J"] \\ X \ar[r,"\SF(X)"'] \ar[ur, bend left, dashed] & \BG \end{tikzcd}\]
exists making the diagram commute up to homotopy.
\end{Def}

\begin{Rmk}\label{rem:different-reducibilities}
By the chain of maps
\[ \BO \lto \BPL \lto \BTop \lto \BG,\]
induced by the evident inclusions,
there are three things that could potentially be meant by reducibility, namely admitting a lift to either of the three spaces $\BO$, $\BPL$, or $\BTop$. We will call these reducible, PL-reducible, and Top-reducible, respectively.
We will only focus on the reducibility question (i.e.\ admitting a lift to $\BO$) in this paper since for PD complexes of dimension at most 4, the potentially different meanings collapse to the same. In general, this is not the case when $X$ has non-trivial cohomology in degrees larger than four, and one should always be more specific in which sense reducibility is meant. We will briefly come back to this in \cref{section4}.
\end{Rmk}

\begin{Ex}\label{Example:SF}
Let $X$ be a PD space which is homotopy equivalent to a closed smooth manifold $M$. Then $X$ is reducible. This follows from \cref{detecting the SF}: Let $f \colon M \to X$ be a homotopy equivalence, and let $\xi = (f^{-1})^*(\nu(M))$. Then one has a diagram as in \cref{detecting the SF}, hence the claim follows.
\end{Ex}

Thus reducibility is an obstruction to the question whether a given PD space is homotopy equivalent to a closed manifold. In fact, it is the first obstruction in the surgery program. In some cases, it is the only obstruction. We will briefly come back to this in \cref{subsection-surgery}
\newline

We now recall that by work of Boardman--Vogt, the $J$-homomorphism $\BO \to \BG$, which classifies the underlying spherical fibration of the universal stable vector bundle, is a map of (connected and hence group-like) $\E_\infty$-spaces, i.e.\ of infinite loop spaces. Its fibre is denoted by $\G/\o$, and by a result of Sullivan's its associated cohomology theory $\G/\o^*(-)$ when evaluated on manifolds is that of bordism classes of smooth degree 1 normal maps. Hence it plays a prominent role in surgery theory. There is then a canonical fibre sequence
\[ \BO \stackrel{J}{\lto} \BG \lto \B(\G/\o)\]
and thus the obstruction to finding a vector bundle lift of a given spherical fibration $X \to \BG$ is given by the composite
\[ [ X \lto \BG \lto \B(\G/\o) ] \in \G/\o^1(X).\]

If $X$ has trivial cohomology above degree $n$, as is the case for an $n$-dimensional PD space, the canonical truncation map of $\B(\G/\o)$ 
induces an equivalence on homotopy classes of maps
\[ [X,\B(\G/\o)] \stackrel{\cong}{\lto} [X,\tau_{\leq n}(\B(\G/\o))]\]
by obstruction theory. Recall that for any space $Y$, the map $Y \to \tau_{\leq n}(Y)$ is the initial map to a space whose homotopy groups vanish in degrees above $n$.
The low dimensional homotopy groups of $\B(\G/\o)$ are well-known, we shall record the calculation as the following lemma.
\begin{Lemma}
There is a unique equivalence $\tau_{\leq 4}(\B(\G/\o)) \simeq K(\Z/2,3)$ as pointed, in fact as $\E_\infty$-spaces.
\end{Lemma}
\begin{proof}
It suffices to show that the homotopy groups of both sides agree, as then there exists such an equivalence. This equivalence is unique because the space of pointed, in fact of $\E_\infty$ self-equivalences of $K(\Z/2,3)$ is contractible.
To see the claim about homotopy groups, one considers the long exact sequence in homotopy groups associated to the fibration 
\[ \BO \stackrel{J}{\lto} \BG \lto \B(\G/\o).\]
The J-homomorphism induces an isomorphism on $\pi_i$ for $i=1,2$ and a surjection on $\pi_4$. Furthermore $\pi_3(\BO) = 0$ and $\pi_3(\BG) \cong \Z/2$.
\end{proof}

\begin{Lemma}\label{e_1}
The homotopy class of the composite 
\[ \BG \lto \B(\G/\o) \lto \tau_{\leq 4}(\B(\G/\o)) \simeq K(\Z/2,3) \] 
is the unique non-trivial class in
\[\ker\big(\H^3(\BG;\Z/2) \lto \H^3(\BO;\Z/2) \big).\]
\end{Lemma}
\begin{proof}
This follows from the Serre spectral sequence associated to the fibration
\[\BO \lto \BG \lto \B(\G/\o) \]
and the fact that the map $\H^*(\BG;\Z/2) \to \H^*(\BO;\Z/2)$ is surjective.
\end{proof}

\begin{Rmk}
In \cite{GitlerStasheff} a cohomology class $e_1 \in \H^3(\BG;\Z/2)$ is constructed by means of a secondary operation. It is called \emph{the first exotic class} and it is shown that 
\begin{enumerate}
\item[$\bullet$] $e_1$ is non-trivial, see \cite[Theorem 5.1]{GitlerStasheff}, and
\item[$\bullet$] $e_1$ lies in the kernel of the map $\H^3(\BG;\Z/2) \to \H^3(\BO;\Z/2)$, see \cite[before Theorem 5.2]{GitlerStasheff}.
\end{enumerate}
\cref{e_1} thus shows that the homotopy class of the composite
\[ \BG \lto \B(\G/\o) \lto K(\Z/2,3) \]
is given by $e_1$.
\end{Rmk}

\begin{Cor}
Let $X$ be a PD space of dimension at most 4. Then the obstruction to reducibility is given by
\[ e_1(\SF(X)) \in \H^3(X;\Z/2).\]
\end{Cor}

\begin{Cor}
Let $X$ be a PD space of dimension at most $2$. Then $X$ is reducible.
\end{Cor}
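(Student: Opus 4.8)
The plan is to deduce this immediately from the preceding corollary, which identifies the single obstruction to reducibility of a PD space of dimension at most $4$ with the exotic class $e_1(\SF(X)) \in \H^3(X;\Z/2)$. Once that identification is granted, the only remaining point is to observe that this obstruction group vanishes for purely dimensional reasons.

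Concretely, I would recall from the discussion of Poincar\'e duality at the beginning of the paper that the (co)homology of an $n$-dimensional PD space vanishes in degrees strictly larger than $n$. Applying this with $n \leq 2$ gives $\H^3(X;\Z/2) = 0$, so the class $e_1(\SF(X))$ lies in the trivial group and is therefore zero. Since the preceding corollary identifies this class as the full obstruction to finding a vector bundle lift of $\SF(X)$, the lift exists and $X$ is reducible.

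Alternatively, and perhaps more conceptually, one can argue directly at the level of mapping spaces: as $X$ has no cohomology above degree $2$, obstruction theory yields $[X,\B(\G/\o)] \cong [X,\tau_{\leq 2}(\B(\G/\o))]$, and the computation $\tau_{\leq 4}(\B(\G/\o)) \simeq K(\Z/2,3)$ shows that $\tau_{\leq 2}(\B(\G/\o))$ is contractible. Hence the classifying map of the reducibility obstruction is automatically null, again giving the conclusion.

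I do not anticipate any genuine obstacle here. All of the substantive work has already been carried out in identifying the obstruction with $e_1(\SF(X))$ via the Gitler--Stasheff class and the truncation $\tau_{\leq 4}(\B(\G/\o)) \simeq K(\Z/2,3)$; the only additional input is the elementary vanishing $\H^3(X;\Z/2) = 0$ for $\dim X \leq 2$, so the statement is essentially a formal consequence of what precedes it.
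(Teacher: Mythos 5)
Your proof is correct and is exactly the argument the paper intends: the corollary is stated there without proof precisely because, as you observe, the obstruction $e_1(\SF(X))$ from the preceding corollary lives in $\H^3(X;\Z/2)$, which vanishes for a PD space of dimension at most $2$. Your alternative formulation via the contractibility of $\tau_{\leq 2}(\B(\G/\o))$ is the same argument phrased at the level of classifying spaces, so there is nothing to add.
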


\begin{Rmk}\label{Rmk:2d-PD-spaces}
It turns out that even more is true, namely that every 2-dimensional PD space is in fact homotopy equivalent to a 2-dimensional manifold:
Wall showed that a 2-dimensional PD space with finite fundamental group is homotopy equivalent to $S^2$ or $\RP^2$, see \cite[Theorem 4.2 (iv) \& Theorem 4.3]{Wall}. If, on the other hand, $X$ has infinite fundamental group, then Wall showed that $X$ is aspherical, see \cite[Theorem 4.2 (vi)]{Wall}. To see that every 2-dimensional PD space is homotopy equivalent to a 2-dimensional manifold it hence suffices to know that every group satisfying 2-dimensional Poincar\'e duality is isomorphic to a surface group. This is the main result of \cite{EL}, building on earlier work of \cite{EM}.

For completeness, we add here that also every connected 1-dimensional PD space is homotopy equivalent to $S^1$ \cite[Theorem 4.2 (ii)]{Wall}.
\end{Rmk}

\begin{Cor}\label{reducibility for pi1 trivial 4 PD spaces}
Let $X$ be a 4-dimensional PD space such that $\H_1(X;\Z/2) = 0$, e.g.\ let $X$ be simply connected. Then $X$ is reducible.
\end{Cor}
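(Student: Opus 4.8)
The plan is to deduce the result directly from the identification of the reducibility obstruction established above, combined with $\Z/2$-Poincar\'e duality; no further geometric input is needed. By the corollary identifying the reducibility obstruction, for a $4$-dimensional PD space $X$ the only obstruction to the existence of a vector bundle lift of $\SF(X)$ is the single class $e_1(\SF(X)) \in \H^3(X;\Z/2)$, and $X$ is reducible precisely when this class vanishes. So the entire task reduces to showing that $e_1(\SF(X)) = 0$ under the hypothesis $\H_1(X;\Z/2) = 0$.

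First I would invoke the fact, emphasized in the opening discussion, that every PD space is canonically $\Z/2$-oriented and hence satisfies untwisted Poincar\'e duality with $\Z/2$ coefficients, with no orientability hypothesis required. Specializing the duality isomorphism to $n = 4$ and $k = 3$ produces
\[ \H^3(X;\Z/2) \xrightarrow{\ \cong\ } \H_1(X;\Z/2). \]
Under the assumption $\H_1(X;\Z/2) = 0$ the right-hand side, and therefore the whole group $\H^3(X;\Z/2)$ in which the obstruction is required to live, is trivial. Consequently $e_1(\SF(X)) = 0$ automatically, and $X$ is reducible. To cover the stated special case, I would remark that if $X$ is simply connected then $\H_1(X;\Z) = 0$, whence the universal coefficient theorem forces $\H_1(X;\Z/2) = 0$, so the simply connected hypothesis is a particular instance of the general one.

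The only points requiring any care are the correct indexing in the duality isomorphism and the observation that the $\Z/2$-version of Poincar\'e duality is available unconditionally, so that the obstruction group is genuinely forced to vanish regardless of orientability. I do not expect a real obstacle here: once the reducibility obstruction has been pinned down to a single class in $\H^3(X;\Z/2)$, the conclusion is immediate from the triviality of that cohomology group. The substance of the argument therefore lies entirely in the preceding corollaries; this statement is essentially a dimension count.
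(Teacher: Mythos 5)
Your proposal is correct and is exactly the paper's argument: the obstruction lives in $\H^3(X;\Z/2)$, which vanishes by (untwisted, unconditional) $\Z/2$-Poincar\'e duality since $\H^3(X;\Z/2)\cong\H_1(X;\Z/2)=0$. The paper states this in one line; you have merely spelled out the same dimension count in more detail.
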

\begin{proof}
By Poincar\'e duality we have $\H^3(X;\Z/2) \cong \H_1(X;\Z/2) = 0$, thus the obstruction group vanishes.
\end{proof}

\begin{Rmk}
\cref{reducibility for pi1 trivial 4 PD spaces} implies that the Spivak normal fibration of a simply connected topological 4-manifold admits a vector bundle reduction. This might sound surprising at first glance in light of the non-triviality of the Kirby--Siebenmann invariant: Recall that a topological manifold $M$ has a topological tangent bundle, whose classifying map is a map
\[ \tau(M) \colon M \lto \BTop.\]
A first obstruction for $M$ to admit a PL-structure or a smooth structure is that this classifying map must lift to $\BPL$. The obstruction to such a lift is given by the composite
\[ M \lto \BTop \lto \B(\Top/\PL).\]
Results of Kirby--Siebenmann say that $\B(\Top/\PL) \simeq K(\Z/2,4)$, see \cite{KS}, and so the obstruction of finding a PL-structure on the topological tangent bundle is given by an element
\[ \ks(M) \in \H^4(M;\Z/2)\]
called the Kirby--Siebenmann invariant. In dimensions less or equal to 6, finding a lift of $\tau(M)$ to $\BPL$ or $\BO$ is equivalent, as the space $\PL/\o$ is 6-connected by \cite{Moise, Cerf, KM}.

A prominent example of a simply connected topological $4$-manifold with non-trivial Kirby--Siebenmann invariant is given by Freedman's $E_8$-manifold. It follows that $\nu(E_8)$, the inverse of $\tau(E_8)$, also does not admit a vector bundle reduction. But by \cref{reducibility for pi1 trivial 4 PD spaces}, the underlying spherical fibration of $\nu(E_8)$, which is $\SF(E_8)$, \emph{does} admit a reduction to a vector bundle, say $V(E_8)$. It follows that $V(E_8)$ is not isomorphic to $\nu(E_8)$ as topological bundles because their Kirby--Siebenmann invariants are different.
\end{Rmk}

\section{The main theorem}\label{section2}
For the proof, we will make use of algebraic L-theory as defined and developed by Ranicki in e.g.\ \cite{Ranicki}. 
\begin{Notation}
We will denote quadratic L-theory of a ring $R$ by $\l^q(R)$, symmetric L-theory by $\l^s(R)$ and normal L-theory by $\l^n(R)$. Here we always mean the respective spectra, recall that $\l^n(R)$ is by definition the cofibre of the symmetrisation map $\l^q(R) \to \l^s(R)$. The L-groups will be denoted by 
\[ \l^q_n(R) = \pi_n(\l^q(R)) \] 
and likewise for the symmetric and normal case. Notice that this notation is different from the one used by Ranicki: He writes $\l^k(R)$ for what we denote $\l^s_k(R)$, $\l_k(R)$ for what we denote $\l^q_k(R)$ and $\widehat{\l}^k(R)$ for what we denote by $\l^n_k(R)$. In particular there is the unfortunate problem that $\l^n(R)$ in our notation denotes the \emph{normal} L-\emph{spectrum}, whereas in Ranicki's notation this would be the $n^{th}$ \emph{symmetric} L-\emph{group} of $R$. Furthermore, my notation $\l^s(R)$ (where the s stands for symmetric, of course) should not be confused with the superscript $s$ denoting the \emph{simple} decoration. We apologise for this notational inconvenience.
The upshot to take away is that when we write superscripts, they indicate which of the L-theories (quadratic, symmetric, normal) is meant, whereas subscripts refer to the homotopy groups of the corresponding spectrum. There will be no source of confusion in terms of decorations, as we will always use the free (or in our case equivalently the projective) decoration.
\end{Notation}

The starting point for the approach we want to take here is what we refer to as the Poincar\'e obstruction sequence, also called the Levitt--Jones--Quinn Poincar\'e bordism sequence in \cite{KT}: For any space $X$, there is an exact sequence (provided $n$ is at least 5)
\[ \dots \lto \mathrm{L}^q_n(\Z[\pi_1(X)]) \lto \Omega^\mathrm{P}_n(X) \lto \Omega^\mathrm{N}_n(X) \stackrel{\vartheta}{\lto} \mathrm{L}^q_{n-1}(\Z[\pi_1(X)]) \lto \dots \]
where $\Omega^\mathrm{P}_*(X)$ denotes oriented Poincar\'e bordism of $X$, $\Omega^\mathrm{N}_*(X) \cong \pi_*(X_+\otimes\MSG)$ is oriented normal bordism, and $\mathrm{L}^q_n(\Z[\pi_1(X)])$ denotes the $n^{th}$ quadratic L-group of $\Z[\pi_1(X)]$ (with free decoration, to be precise). See \cite[Proposition 19.6]{Ranicki} for this long exact sequence, which was first proven in \cite{Quinn, Levitt, Jones}, and further discussed in \cite{HV}. We emphasize at this point that we do not make use of the exactness of this sequence.
The map 
\[  \mathrm{L}^q_n(\Z[\pi_1(X)]) \lto \Omega^\mathrm{P}_n(X) \]
produces out of a quadratic form (or formation) an oriented, and in fact reducible, PD space by using Wall's realisation theorem, see \cite[Chapter 19]{Ranicki}. This uses that $n$ is at least 5, and we will not need this map in what follows. 

We recall here that an oriented geometric normal space, i.e.\ a representative of $\Omega^\mathrm{N}(X)$ consists of a space $Y$, equipped with a map $Y \to X$, an oriented stable spherical fibration $\xi$ over $Y$ and a map $\S^n \to \M(\xi)$. By the Thom isomorphism, this map gives rise to a fundamental class $[X] \in \H_n(X;\Z)$, cap product with which is \emph{not} required to be an isomorphism. 
The map from Poincar\'e bordism to normal bordism is induced by viewing a PD space as a normal space, i.e.\ by forgetting that the (co)homology of $X$ satisfies Poincar\'e duality.

We will need the following well-known properties of the Poincar\'e obstruction sequence, which justify calling the map $\vartheta$ the \emph{Poincar\'e obstruction map}. 
\begin{Lemma} 
The part 
\[ \Omega^\mathrm{P}_n(X) \lto \Omega^\mathrm{N}_n(X) \stackrel{\vartheta}{\lto} \mathrm{L}^q_{n-1}(\Z[\pi_1(X)])\]
of the above sequence is well-defined for all $n \geq 0$ and the composite is trivial for all $n\geq 0$. 
\end{Lemma}
\begin{proof}
The map 
\[ \Omega^\mathrm{N}_n(X) \stackrel{\vartheta}{\lto} \mathrm{L}^q_{n-1}(\Z[\pi_1(X)])\]
is constructed as follows, see \cite[Chapter 16]{Ranicki}. It is given by the composite
\[ \Omega^\mathrm{N}_n(X) \lto \l^n_n(\Z[\pi_1(X)]) \lto \l^q_{n-1}(\Z[\pi_1(X)])\]
where the first map is induced by taking the normal symmetric signature of a normal space, and the second map is the boundary map in the long exact sequence relating quadratic, symmetric, and normal L-groups.
We will use that the normal symmetric signature can be constructed as follows. Recall first that the normal bordism groups satisfy 
\[\Omega^\mathrm{N}_*(X) \cong \pi_*(X_+\otimes \MSG) \]
by normal transversality, see \cite{Quinn}. 
Using the \emph{normal Sullivan--Ranicki orientation} $\MSG \to \l^n(\Z)$,
one can then consider the composite
\[ X_+\otimes \MSG \lto X_+ \otimes \l^n(\Z) \stackrel{\mathcal{A}}{\lto} \l^n(\Z[\pi_1(X)])\]
where $\mathcal{A}$ denotes the assembly map for normal L-theory.
Then the above normal symmetric signature coincides with the map induced on $\pi_n$ of this composite.

Since assembly maps are natural, it follows that the diagram relating the assembly map for normal and quadratic L-theory
\[\xymatrix{X_+\otimes \l^n(\Z) \ar[r]^-{\mathcal{A}} \ar[d] & \l^n(\Z[\pi_1(X)]) \ar[d] \\ \Sigma(X_+\otimes \l^q(\Z)) \ar[r]^-{\Sigma\mathcal{A}} & \Sigma\l^q(\Z[\pi_1(X)])}\]
commutes.
The Poincar\'e obstruction map $\vartheta$ is thus given by the induced map on homotopy groups of the composite
\[ X_+ \otimes \MSG \lto X_+\otimes \l^n(\Z) \lto \Sigma( X_+\otimes  \l^q(\Z)) \stackrel{\Sigma\mathcal{A}}{\lto} \Sigma\l^q(\Z\pi_1(X))\]
as indicated in the diagram in \cite[Proposition 19.6]{Ranicki}.
It follows that one has a commutative diagram, compare to \cite[page 283]{RanickiTSO},
\[\begin{tikzcd}
	\Omega^\mathrm{P}_n(X) \ar[d] \ar[r] & \l^s_n(\Z\pi_1(X)) \ar[d]  \\ \Omega^\mathrm{N}_n(X) \ar[r] \ar[dr,"\vartheta"', bend right=20]& \l^n_n(\Z\pi_1(X)) \ar[d] \\ & \l^q_{n-1}(\Z\pi_1(X)) 
\end{tikzcd}\]
where the top horizontal map is induced by taking the symmetric signature of a Poincar\'e complex.
This shows that the Poincar\'e obstruction map always vanishes on Poincar\'e bordism since the right vertical composition is zero. 
\end{proof}

The following lemma is crucial for the following arguments. We include a proof here for completeness, the $\pi_3$-case is essentially \cite[Example 2.16 (iv)]{Ranicki}.
\begin{Lemma}\label{Lemma:effect-on-pi3}
The normal Sullivan--Ranicki orientation $\MSG \to \l^n(\Z)$ induces an isomorphism on $\pi_3$ and $\pi_4$.
\end{Lemma}
\begin{proof}
First, we record here the low dimensional homotopy groups of $\MSG$ and $\l^n(\Z)$. They are given by 
\[\pi_n(\MSG) \cong \begin{cases} \Z & \text{ for } n=0, \\ 0 & \text{ for } n=1,2 \\ \Z/2 & \text{ for } n=3 \\ \Z/8 & \text{ for } n=4. \end{cases} 
	\quad \text{ and } \quad \pi_n(\l^n(\Z)) \cong \begin{cases} \Z/8 & \text{ for } n \equiv 0 (4) \\ \Z/2 & \text{ for } n \equiv 1,3 (4) \\ 0 & \text{ for } n\equiv 2 (4) \end{cases}\]
For the homotopy of normal L-theory, see \cite[pg.\ 13]{Ranicki}. To see that the homotopy groups of $\MSG$ are as claimed, consider the cofibre $C$ of the canonical map $\MSO \to \MSG$. Using the Thom isomorphism one sees that the homology of $C$ is trivial in degrees below $3$ and isomorphic to $\Z/2$ in degree 3. Therefore, $C$ is 3-connective and $\pi_3(C) = \Z/2$. The calculation of for $n\leq 3$ then follows from the fact that $\pi_n(\MSO) = 0$ for $n =1,2,3$. For $n=4$, one can can give a direct calculation based on the fact that $\MSG$ is an Eilenberg--Mac Lane spectrum: Abbreviating $A= \pi_4(\MSG)$, using the lower homotopy calculation of $\pi_n(\MSG)$ and the fact that $\H_1(\H\Z/2;\Z) = 0$, we find an isomorphism.
\[ \H_4(\BSG;\Z) \cong \H_4(\MSG;\Z) \cong \H_4(\H\Z;\Z) \oplus A.\]
Since $\H_4(\BSG;\Z) \cong \Z/2 \oplus \Z/24$, see e.g.\ \cite[pg.\ 249]{Milgram}, and $\H_4(\H\Z;\Z) \cong \Z/2 \oplus \Z/3$, we find that $\pi_4(\MSG) = A \cong \Z/8$ as claimed. 

Invoking the commutative diagram 
\[\begin{tikzcd}
	\Omega^\mathrm{P}_4(\ast) \ar[r] \ar[d] & \l^s_4(\Z) \ar[d] \\
	\Omega^\mathrm{N}_4(\ast) \ar[r] & \l^n_4(\Z)
\end{tikzcd}\]
and the fact that $\CP^2$ has signature $1$, we deduce that the lower horizontal map is isomorphic to a surjection $\Z/8 \to \Z/8$ and therefore an isomorphism. 

To discuss the effect of the normal Sullivan--Ranicki orientation on $\pi_3$, we now recall from \cite[page 283]{RanickiTSO} that the (normal) Sullivan--Ranicki orientations fit into a commutative diagram of exact rows
\[ \begin{tikzcd}
	\Omega^\mathrm{N}_{n+1}(\ast) \ar[r] \ar[d] & \Omega^\mathrm{N,P}_{n+1}(\ast) \ar[d] \ar[r] & \Omega^{\mathrm{P}}_{n}(\ast) \ar[d] \\
	\l^n_{n+1}(\Z) \ar[r] & \l^q_{n}(\Z) \ar[r] & \l^s_{n}(\Z)
\end{tikzcd}\]
whose middle vertical map has the following property: Given a degree one normal map $f\colon M \to X$ from a closed $n$-dimensional manifold to a simply connected $n$-dimensional PD complex $X$, its mapping cylinder $(\mathrm{Cyl}(f),M \cup -X)$ is canonically an element of $\Omega_{n+1}^\mathrm{N,P}(\ast)$. On such elements, the vertical map in the above diagram simply takes the surgery obstruction $\sigma(f)$ of the degree one normal map $f$. 

We now aim to show that the left vertical map in the above diagram is surjective for $n=2$. Using that $\Omega_2^{\mathrm{P}}(\ast) = 0$, see \cref{Rmk19.9-Ranicki} below for a proof of this fact, and the fact that the map $\l^n_3(\Z) \to \l^q_2(\Z)$ is an isomorphism, it then suffices to show that the map $\Omega^\mathrm{N,P}_3(\ast) \to \l^q_2(\Z)$ is surjective. Using the above description of this map, it suffices to recall that there is a degree one normal map $T^2 \to S^2$ of Arf invariant one.
\end{proof}

\begin{Rmk}\label{Rmk19.9-Ranicki}
We now expand on Ranicki's remark \cite[Remark 19.9]{Ranicki} and will show that the following diagram of spectra
\[\begin{tikzcd}
	\Omega^\mathrm{P}(\ast) \ar[r] \ar[d] & \tau_{\geq 0}\l^s(\Z) \ar[d] \\
	\Omega^\mathrm{N}(\ast) \ar[r] & \tau_{\geq 1/2}\l^n(\Z)
\end{tikzcd}\]
is a pullback. Here, $\tau_{\geq 1/2} \l^n(\Z)$ denotes the pullback $\tau_{\geq 0} \l^n(\Z) \times_{\H\Z/8} \H\Z$, and the horizontal maps are induced by the (normal) Sullivan--Ranicki orientations. Equivalently, we will show that the canonical map 
\[\Omega^{\mathrm{P}}(\ast) \lto F = \fib\big(\Omega^\mathrm{N}(\ast) \to \Sigma \tau_{\geq 1}\l^q(\Z)\big) \]
induces an isomorphism on $\pi_n$ for $n\geq 0$. The results of Jones, see \cite{Jones, Quinn}, say that this map induces an isomorphism on $\pi_n$ for $n\geq 5$. It therefore suffices to discuss the induced map on $\pi_n$ for $n\leq 4$. We discuss the case $n=4$ first and 
consider the diagram of short exact sequences
\[\begin{tikzcd}
	 & \l^q_4(\Z) \ar[r] & \pi_4(F) \ar[r] \ar[d] & \pi_4(\MSG) \ar[d,"\cong"] \ar[r] & 0 \\
	0 \ar[r] & \l^q_4(\Z) \ar[r] \ar[u,equal] & \l^s_4(\Z) \ar[r] & \l^n_4(\Z) \ar[r] & 0
\end{tikzcd}\]
which, together with \cref{Lemma:effect-on-pi3}, shows that the map $\pi_4(F) \to \l^s_4(\Z)$ is an isomorphism. Now, in addition, the composite 
\[ \Omega^\mathrm{P}_4(\ast) \lto \pi_4(F) \lto \l^s_4(\Z)\]
is given by taking the signature of an oriented PD-complex, and is an isomorphism as one can show by a direct argument, see e.g.\ \cite[pg.\ 4]{Hillman2}. Consequently, the map $\Omega^\mathrm{P}_4(\ast) \to \pi_4(F)$ is an isomorphism. Now, \cref{Lemma:effect-on-pi3}, together with the fact that $\l^s_1(\Z) = \l^s_2(\Z) = 0$ shows that $\pi_n(F) = 0$ for $0<n<4$; see \cite[pg.\ 12 Introduction]{Ranicki} and e.g.\ \cite[Section 2.2]{CDHIII} for the general calculation of quadratic and symmetric L-groups of Dedekind rings. Therefore, it suffices to know that the same is true for $\Omega^\mathrm{P}_n(\ast)$. For $n=3$ this was recently shown by Hillman \cite[Theorem 1]{Hillman2}, and for $n=1,2$ it is classical, we record a proof here for convenience.
We recall from \cref{Rmk:2d-PD-spaces} that for $n=1,2$, every oriented n-dimensional PD-space is homotopy equivalent to a closed n-dimensional manifold. Since the mapping cylinder of a homotopy equivalence between PD spaces is a Poincar\'e cobordism, it suffices now to recall that every oriented n-dimensional closed manifold is null bordant for $n=1,2$. 
%

Finally, we note that both the composite 
\[\Omega^{\mathrm{P}}_0(\ast) \lto \pi_0(F) \lto  \Omega^\mathrm{N}_0(\ast) \]
and the latter map in the above composite are isomorphisms. This finishes the proof of the lemma.
\end{Rmk}

Continuing towards the proof of our main theorem, we will abbreviate a normal space $(X,\xi,\S^n \to \M(\xi))$ with $X$ and write $e_1(X)$ for $e_1(\xi)$.

\begin{Lemma}\label{lemma appendix}
The map 
\[ \vartheta\colon \Omega_3^\mathrm{N}(\ast) \lto \l^q_2(\Z)\cong \Z/2 \]
is given by the formula
\[ X \longmapsto \langle e_1(X),[X] \rangle.\]
\end{Lemma}
\begin{proof}
First we claim that the map $\Omega_3^\mathrm{N}(\ast) \to \Z/2$ given by
\[ X \longmapsto \langle e_1(X),[X] \rangle \]
is an isomorphism. To see this one can again consider the cofibre sequence
\[ \MSO \lto \MSG \lto C \]
and calculate the homology of $C$ to be trivial in degrees less than 3 and a $\Z/2$ in degree 3 using the Thom isomorphism for the homology of $\MSO$ and $\MSG$. The claim then follows from the fact that $e_1$ is the unique non-trivial class in $\H^3(\mathrm{BSG};\Z/2)$ which vanishes upon pulling back to $\BSO$, compare \cref{e_1}.

Since $\l^q_2(\Z) \cong \Z/2$ it will then suffice to show that the Poincar\'e obstruction map as described above is non-trivial. 
By definition, it is given by the composite
\[ \pi_3(\MSG) \lto \l^n_3(\Z) \stackrel{\cong}{\lto} \l^q_2(\Z) \]
of which the first map is an isomorphism by \cref{Lemma:effect-on-pi3} and the second map is an isomorphism as well since $\l^s_3(\Z) = \l^s_2(\Z) = 0$.
\end{proof}

As a consequence, we obtain the following corollary.
\begin{Cor}
Every oriented 3-dimensional PD space is reducible.
\end{Cor}
\begin{proof}
We have to show that if $X$ is an oriented 3-dimensional PD space, then 
\[ \langle e_1(X),[X] \rangle = 0.\]
\cref{lemma appendix} shows that the left hand side is given by viewing $X$ as an element of Poincar\'e bordism and then applying the composite
\[ \Omega_3^\mathrm{P}(\ast) \lto \Omega_3^\mathrm{N}(\ast) \lto \mathrm{L}_2^q(\Z) \cong \Z/2.\]
But the composite is trivial.
\end{proof}

To deal with the 4-dimensional case we will make use of the following lemma. We denote by $\widetilde{\Omega}_n^\mathrm{N}(X)$ the reduced normal bordism group of $X$, i.e.\ the kernel of the map $\Omega^\mathrm{N}_n(X) \to \Omega^\mathrm{N}_n(\ast)$. 
\begin{Lemma}\label{lemma}
The Poincar\'e obstruction map 
\[\widetilde{\Omega}_4^\mathrm{N}(K(\Z/2,1)) \lto \l^q_3(\Z[\Z/2]) \cong \Z/2 \]
is given by the formula
\[ \big(X \stackrel{x}{\to} K(\Z/2,1)\big) \longmapsto \langle e_1(X) \cup x, [X] \rangle.\]
\end{Lemma}
\begin{proof}
As in the proof of \cref{lemma appendix} we start by showing that the map 
\[ \big(x\colon X \to K(\Z/2,1)\big) \longmapsto \langle e_1(X) \cup x, [X] \rangle\] 
defines an isomorphism
\[ \widetilde{\Omega}_4^\mathrm{N}(K(\Z/2,1)) \cong \Z/2.\]
This follows from the Atiyah--Hirzebruch spectral sequence which shows that if $Y$ denotes the generator of $\pi_3(\MSG)$, then the element
\[ Y\times S^1 \stackrel{\mathrm{pr}}{\lto} S^1 \lto K(\Z/2,1) \]
is the non-trivial element of $\H_1(K(\Z/2,1);\pi_3(\MSG)) \cong \widetilde{\Omega}_4^{\mathrm{N}}(K(\Z/2,1))$, we use here that $\MSG$ splits as a sum of Eilenberg-Mac Lane spectra, for instance since it is an $\E_1$-$\H\Z$-algebra and therefore a module over $\H\Z$, see for instance \cite[Corollary 3.7]{HLN}. It hence suffices to see that the explicit formula of the lemma does not vanish on this specific element, where it is true by construction.
It hence again suffices to show that the reduced Poincar\'e obstruction map is non-trivial.

As explained earlier, it factors as the composite
\[ \widetilde{\Omega}_4^\mathrm{N}(K(\Z/2,1)) \lto \pi_3(K(\Z/1,1) \otimes \l^q(\Z)) \stackrel{\mathcal{A}}{\lto} \l^q_3(\Z[\Z/2]).\]
In \cite[page 109]{Ranicki} it is stated that the assembly map for quadratic L-theory factors through $\l^q_3(\Z)$ which is trivial. This is not true, though, and in fact it turns out that the assembly map
\[ \pi_3(K(\Z/2,1) \otimes \l^q(\Z)) \lto \l^q_3(\Z[\Z/2])\]
is non-trivial, see \cite[Proposition 7.3]{Weiss}. More precisely, it is shown there that 
\[ \H_1(K(\Z/2,1);\l^q_2(\Z)) \lto \pi_3(K(\Z/2,1)\otimes \l^q(\Z)) \lto \l^q_3(\Z[\Z/2]) \]
is non-zero. Since the composite
\[ \pi_3(\MSG) \lto \pi_3(\l^n(\Z)) \lto \pi_2(\l^q(\Z)) \]
is an isomorphism it follows that the reduced Poincar\'e obstruction map 
\[ \pi_4(K(\Z/2,1)\otimes \MSG) \lto \pi_3(K(\Z/2,1)\otimes \l^q(\Z)) \stackrel{\mathcal{A}}{\lto} \l^q_3(\Z[\Z/2]) \]
is non-trivial as needed.
\end{proof}

\begin{Thm}\label{4-dimensional-case}
Let $X$ be a 4-dimensional oriented PD space. Then $X$ is reducible.
\end{Thm}
\begin{proof}
We need to show that $e_1(X) = 0$. By Poincar\'e duality, this is equivalent to showing that $\langle e_1(X) \cup x,[X] \rangle$ vanishes for all $x \in \H^1(X;\Z/2)$. We view the classifying map $x\colon X \to K(\Z/2,1)$ as an element of $\Omega_4^\mathrm{P}(K(\Z/2,1))$ and consider the sequence
\[\Omega_4^\mathrm{P}(K(\Z/2,1)) \lto \Omega_4^\mathrm{N}(K(\Z/2,1)) \lto \l^q_3(\Z[\Z/2]) \cong \Z/2.\]
As explained earlier, on the one hand, the composite is trivial, but by \cref{lemma} it is also given by 
\[ \langle e_1(X) \cup x,[X] \rangle \in \Z/2.\]
Thus the claim follows.
\end{proof}

As pointed out by Hambleton \cite{Hambleton}, one can deduce the reducibility of 3-dimensional PD spaces (orientable or not) from \cref{4-dimensional-case}.

\begin{Thm}\label{3-dimensional case}
Let $X$ be a 3-dimensional PD space. Then $X$ is reducible.
\end{Thm}
\begin{proof}
Let $\hat{X} \to X$ be the principal $C_2$-bundle associated to the orientation local system of $X$. Consider the $C_2$-space $S^1$ with action given by complex conjugation. One obtains a fibre bundle 
\[ S^1 \lto \hat{X} \times_{C_2} S^1 \stackrel{p}{\lto} X \]
whose projection has a section induced by the inclusion of a fixed point for the $C_2$-action on $S^1$. From \cite[Corollary F]{Klein} it follows that $\hat{X}\times_{C_2} S^1$ is a PD-space of dimension $4$ and it is not hard to see that it is orientable since $C_2$ acts orientation reversingly on both $\hat{X}$ and $S^1$. Furthermore, \cite[Theorem I]{Klein} implies that 
\[ \SF(\hat{X}\times_{C_2} S^1) = p^*(\SF(X)) \ast S_v(p) \]
where $S_v(p)$ is the underlying sphere bundle of the vertical tangent bundle of the fibre bundle $p$. Thus $S_v(p)$ has a vector bundle reduction, and by \cref{4-dimensional-case} so does $\SF(\hat{X}\times_{C_2} S^1)$. Thus also $p^*(\SF(X))$ has a vector bundle reduction, i.e.\ the composite
\[\begin{tikzcd} \hat{X}\times_{C_2} S^1 \ar[r,"p"] & X \ar[r,"\mathrm{SF}(X)"] & \BG \ar[r] & \B(\G/\o) \end{tikzcd} \]
is trivial. Since $p$ admits a section it follows that also 
\[ \begin{tikzcd} X \ar[r,"\SF(X)"] & \BG \ar[r] & \B(\G/\o) \end{tikzcd}\]
is trivial as claimed.
\end{proof}

\section{A geometric approach to reducibility}\label{section3}

The purpose of this section is to give an alternative characterisation of reducible PD spaces using degree one normal maps, and to indicate how this approach can lead to another proof of the main result. It is in this approach where the earlier version of this paper contained a gap, as we will explain below.
We first recall the following result due to Sullivan:
\begin{Prop}\label{Sullivan}
Let $X$ be an $n$-dimensional PD space. Then $X$ is reducible if and only if there exists a degree 1 normal map $f\colon M \to X$ for a closed $n$-dimensional manifold $M$.
\end{Prop}
\begin{proof}
A degree 1 normal map $f\colon M \to X$ more precisely consists of a pullback diagram
\[\begin{tikzcd}
	\nu(M) \ar[r,"\bar{f}"] \ar[d] & E \ar[d] \\
	M \ar[r,"f"] & X
\end{tikzcd}\]
where $E$ is a stable vector bundle over $X$, $\nu(M)$ is the stable normal bundle of $M$, and $f$ has degree 1. \cref{detecting the SF} shows that the underlying spherical fibration of $E$ is the Spivak normal fibration of $X$, so $X$ is reducible. 

Conversely, if $X$ is reducible, let $E$ be a vector bundle reduction of $\SF(X)$. Then the (unstable) Pontryagin--Thom collapse map $c_X \colon S^N \to \Th(E)$ can be made transverse to the zero-section $X \to \Th(E)$, giving rise to a submanifold $M$ of $S^N$ mapping to $X$, together with an identification of the normal bundle of the embedding $M \subseteq S^N$ with the pullback of $E$. This shows that $M \to X$ refines to a degree one normal map.
\end{proof}

\begin{Rmk}
By the transversality results available for zero sections of Top and PL bundles, the above proposition (with essentially the same proof) is also correct if reducible is replaced by Top- or PL-reducible, and $M$ is a closed topological or PL manifold, respectively.
\end{Rmk}

For 4-dimensional PD spaces, one can give a sufficient criterion for the existence of a degree one normal map in terms of characteristic classes.
\begin{Prop}\label{reduction dim 4}
Let $X$ be a 4-dimensional oriented PD space. Then $X$ is reducible if and only if there exists a smooth closed oriented 4-manifold $M$ and a degree 1 map $f\colon M \to X$ such that $f^*(w_2(X)) = w_2(M)$.
\end{Prop}
\begin{proof}
By \cref{Sullivan}, if $X$ is reducible there exists a pullback diagram
\[ \begin{tikzcd} \nu(M) \ar[r,"\hat{f}"] \ar[d] & E \ar[d] \\ M \ar[r,"f"] & X \end{tikzcd}\]
where $f$ has degree 1 and $E$ is a vector bundle over $X$. Since the Stiefel--Whitney classes of a vector bundle depend only on its underlying spherical fibration, and $X$ and $M$ are orientable (in particular $w_1(X) =w_1(M) = 0$), we find that 
\[ w_2(M) = w_2(\nu_M) = f^*(w_2(E)) = f^*(w_2(\SF(X)) = f^*w_2(X).\]

To show the converse implication, by Sullivan's result it suffices to find a \emph{normal refinement} of $f$, i.e.\ an oriented vector bundle $E$ over $X$ and an isomorphism $f^*(E) \simeq \nu_M$ is stable vector bundles over $M$.

First we claim that one can find an oriented vector bundle $E$ over $X$ such that $w_2(E) = w_2(X)$. For this one considers the fibre sequence
\[ \tau_{\leq 4}(\BSO) \stackrel{w_2}{\lto} K(\Z/2,2) \xrightarrow{\beta\Sq^2} K(\Z,5)\]
and deduces that there is no obstruction of lifting an element in $\H^2(X;\Z/2)$ to a map 
\[ X \lto \tau_{\leq4}(\BSO).\] 
Using once again that $X$ is a 4-dimensional PD space it follows that there is thus no obstruction to realising any element in $\H^2(X;\Z/2)$ as $w_2$ of a stable oriented vector bundle over $X$. So pick a bundle $E$ with $w_2(E) = w_2(X)$. It has a first Pontryagin class $p_1(E) \in \H^4(X;\Z)$. There is a canonical identification $\H^4(X;\Z) \cong \Z$ because $X$ is oriented and we will suppress this identification in the notation. Since 
\[p_1(E) \equiv w_2(E)^2 = w_2(X)^2 \mod 2\]
its parity is determined by the cohomology ring of $X$ and $w_2(X)$. It follows that the parity of $f^*(p_1(E))$ and the parity of $p_1(M)$ agree:
\[ p_1(M) \equiv w_2(M)^2 = f^*(w_2(X)^2) = f^*(w_2(E)^2) \equiv f^*(p_1(E)) \mod 2.\]

Thus $f^*(p_1(E))$ differs from $p_1(M)$ by an even number. It hence suffices to argue why one can replace $E$ by an oriented vector bundle $E'$ with $w_2(E) = w_2(E')$ and $p_1(E')$ being the same number as $p_1(M)$ -- again, we identify $\H^4(X;\Z) \cong \Z \cong \H^4(M;\Z)$ using the respective orientations. Now observe that the PD space $X$ admits a degree 1 map to $S^4$ (use a general fact about CW structures on oriented connected PD spaces of dimension different from 3, see \cite[Corollary 2.3.1]{Wall} or use that $S^4 \to K(\Z,4)$ is a 4-equivalence) and recall that a generator of $\pi_4(\BSO)$ has $p_1$ equal to $\pm2$. Thus pulling back the correct generator of $\pi_4(\BSO)$ along the degree 1 map $X \to S^4$ provides an oriented vector bundle over $X$ with $w_2 = 0$ and $p_1 = 2$. Adding an appropriate multiple of this bundle to $E$ will produce the desired bundle $E'$.

We have thus explained how to construct an oriented stable vector bundle $E$ over $X$ such that $f^*(E)$ and $\nu(M)$ have the same image under the map
\[ [M,\BSO] \xrightarrow{(w_2,p_1)} \H^2(M;\Z/2)\times \H^4(M;\Z).\]

It therefore suffices to know that this map is an injective group homomorphism.
It is easy to see that this map is a group homomorphism because $\H^4(M;\Z)$ is torsion-free (recall that $p_1$ is in general only primitive up to 2-torsion). To see that it is injective it thus suffice to show that an oriented stable vector bundle $V$ over $M$ where $w_2(V) = 0 = p_1(V)$ is trivial.
For this consider the diagram
\[ \begin{tikzcd} & K(\Z,4) & \\ K(\Z,4) \ar[ur, bend left, "\cdot 2"] \ar[r] & \tau_{\leq 4}(\BSO) \ar[u,"p_1"] \ar[r,"w_2"] & K(\Z/2,2) \\ & M \ar[u,"V"] & \end{tikzcd}\]
and notice that the commutativity of the diagram is a consequence of the fact that the correct generator of $\pi_4(\BSO)$ has $p_1$ equal to $2$.
By assumption, $V$ lifts to $K(\Z,4)$, i.e.\ is given by an element $\bar{V} \in \H^4(M;\Z)$. This has the property that 
\[ 2 \cdot \bar{V} = p_1(V) = 0 \]
and thus that $\bar{V}$ is in fact zero, as $\H^4(M;\Z) \cong \Z$. Thus the claim and with it the lemma follows.
\end{proof}

We have now reduced the question of reducibility of a given PD space $X$ to the existence of specific maps from a manifold to $X$. The existence of such maps can be determined by the (twisted) Atiyah--Hirzebruch spectral sequence for spin bordism, as we explain now.

First, let us assume that $X$ is spin, i.e.\ $w_1(X) = w_2(X) = 0$. Then \cref{reduction dim 4} says that $X$ is reducible if and only if there is a degree one map $M \to X$ where $M$ is a closed spin 4-manifold. This is the case if and only if the map 
\[ \Omega^\Spin_4(X) \cong \MSpin_4(X) \lto \H_4(X;\Z) \]
induced by the canonical map $\MSpin \to \H\Z$ is surjective. In general, i.e.\ when $w_2(X)$ is not assumed to be trivial, we have to consider the \emph{twisted} spin-bordism $\Omega^\Spin_4(X;w_2)$ of $X$ with twist given by the canonical map
\[ X  \stackrel{w_2}{\lto} K(\Z/2,1) \lto \B\gl_1(\MSpin).\]
By a Pontryagin--Thom construction, elements of this twisted bordism theory can be represented by oriented manifolds $M$ equipped with a map to $X$, such that the composite
\[ M \lto X \stackrel{w_2}{\lto} K(\Z/2,2) \]
is equipped with a homotopy to $w_2(M)$.
There is a twisted Atiyah--Hirzebruch spectral sequence
\[ E^2_{p,q} = \H_p(X;\pi_q(\MSpin)) \Longrightarrow \Omega_{p+q}^{\Spin}(X;w_2).\]
If $w_2=0$ is trivial, $\Omega^\Spin_*(X;0)$ is the ordinary spin bordism group of $X$, and the twisted spectral sequence is the usual Atiyah--Hirzebruch spectral sequence for spin bordism.
In any case, the fundamental class $[X]$ of $X$ determines an element in 
\[ E^2_{4,0} = \H_4(X;\pi_0(\MSpin)).\]
Summarizing the above arguments, we arrive at the following theorem.
\begin{Thm}
Let $X$ be an oriented 4-dimensional PD space. Then $X$ is reducible if and only if $[X]$ is a permanent cycle in the twisted Atiyah--Hirzebruch spectral sequence calculating $\Omega_*^\Spin(X;w_2)$.
\end{Thm}

By \cref{4-dimensional-case} or \cite{Hambleton}, $X$ is reducible, and therefore $[X]$ is indeed a permanent cycle in this spectral sequence. Conversely, if we can show that $[X]$ is a permanent cycle by other means, reducibility of $X$ follows. Next, we analyse what can be said about this question without yet alluding to \cref{4-dimensional-case}.
For this, we recall that 
\[ \pi_n(\MSpin) \cong \begin{cases} \Z & \text{ for } n=0 \\ \Z/2 & \text{ for } n=1,2 \\ 0 & \text{ for } n=3 \end{cases}.\]
Therefore, there are two possibly non-trivial differentials emanating from $E^2_{4,0}$, the $d_2$ and the $d_3$.

\begin{Lemma}\label{Lemma-d2}
In the situation described above, we have $d_2[X] = 0$.
\end{Lemma}
\begin{proof}
The differential is given by the following composite
\[ \H_4(X;\Z) \lto \H_4(X;\Z/2) \xrightarrow{(\Sq^2_{w_2})^*} \H_2(X;\Z/2)\]
where $\Sq^2_{w_2}$ denotes the map $\H^2(X;\Z/2) \to \H^4(X;\Z/2)$ given by
\[ x \longmapsto \Sq^2(x) + w_2 \cup x,\]
see \cite[Proposition 1]{Teichner} for a similar statement.
The vanishing of the $d_2$ thus amounts to the statement that for every element $x \in \H^2(X;\Z/2)$ one has 
\[ \langle \Sq^2(x),[X] \rangle = \langle w_2\cup x, [X] \rangle \]
which is a consequence of the Wu--formula and the assumption that $X$ is oriented, so that $w_2(X) = v_2(X)$.
\end{proof}

\begin{Rmk}\label{rem:target-of-d3-trivial}
In order to show that $[X]$ is a permanent cycle in the above spectral sequence, the task then lies in showing the vanishing of the $d_3$-differential
\[ \H_4(X;\Z) \lto \H_1(X;\Z/2)/d_2(\H_3(X;\Z/2)).\]
As in the proof of \cref{Lemma-d2}, the $d_2$-differential appearing in the codomain is determined by the formula
\[ \langle d_2(\alpha),x \rangle = \langle w_2(X)\cup x, \alpha \rangle \]
for all $x\in \H^1(X;\Z/2)$ and $\alpha \in \H_3(X;\Z/2)$. In particular, the $d_3$ vanishes if its target vanishes, which is the case if the map $w_2(X) \cup - \colon \H^1(X;\Z/2) \to \H^3(X;\Z/2)$ is injective (and hence an isomorphism by Poincar\'e duality, but we shall ignore this). Using the Wu formula, this condition is equivalent to the condition that the bilinear form on $\H^1(X;\Z/2)$ given by
\[ (x,y) \mapsto \langle x^2 \cup y^2 ,[X] \rangle \]
is non-degenerate, i.e.\ induces an injection $\H^1(X;\Z/2) \to \H_1(X;\Z/2)$. We note that this condition is not implied by the non-triviality of $w_2$, as the example of $\mathbb{CP}^2 \# (S^1 \times S^3)$ shows.
\end{Rmk}

As another extreme case, one can consider the case where $w_2 = 0$. Also in this case, one can give a direct argument for the vanishing of the needed $d_3$-differential. We wish to thank Achim Krause for a nice Skype session about this.
\begin{Prop}
Let $X$ be a spin 4-dimensional PD space. Then $d_3[X]=0$, and hence $[X]$ is a permanent cycle in the twisted Atiyah--Hirzebruch spectral sequence calculating $\Omega^\Spin_*(X;w_2)$.
\end{Prop}
\begin{proof}
As just explained, we need to show the vanishing of the differential 
\[ d_3 \colon \H_4(X;\Z) \lto \H_1(X;\Z/2),\]
in the usual Atiyah--Hirzebruch spectral sequence converging to $\Omega^\Spin_*(X)$, which is implied by the statement that 
\[ \langle d_3(\alpha),x \rangle = 0 \]
for all $\alpha \in \H_4(X;\Z)$ and $x \in \H^1(X;\Z/2)$. We represent $x$ by a map $\vartheta_x \colon X \to K(\Z/2,1)$ and consider the morphism of spectral sequences induced by the map $\vartheta_x$. This induces a commutative square
\[ \begin{tikzcd} \H_4(X;\Z) \ar[r,"d_3"] \ar[d,"(\vartheta_x)_*"'] & \H_1(X;\Z/2) \ar[d,"(\vartheta_x)_*"] \\ 0 =\H_4(K(\Z/2,1);\Z) \ar[r,"d_3"] & \H_1(K(\Z/2,1);\Z/2) \end{tikzcd}\]
where the vertical maps are induced by $\vartheta_x$. This simply follows because all $d_2$-differentials involving groups that occur in this diagram vanish since $\Sq^2$ vanishes on classes of degree 1 and the already established fact that $[X]$ is in the kernel of the $d_2$. Since the lower left corner of this diagram is trivial it follows that 
\[ (\vartheta_x)_*(d_3[X]) = 0 \]
for all $x \in \H^1(X;\Z/2)$. This implies that $d_3[X]$ is zero as needed.
\end{proof}

\begin{Rmk}
It would be nice to find an independent proof of the vanishing of the $d_3$-differential on $[X]$ for a general oriented 4-dimensional PD space $X$.
\end{Rmk}

\begin{Warning}\label{Warning:error}
In an earlier version of this paper, I intended to show the vanishing of the above $d_3$-differential by comparing $\MSpin$ to $\MSGpin$, where the latter is the Thom spectrum of the fibre of the map 
\[\B\G \stackrel{(w_1,w_2)}{\lto} K(\Z/2,1)\times K(\Z/2,2).\]
Indeed, one can show that $[X]$ is a permanent cycle in the twisted spectral sequence converging to $\Omega^{\mathrm{SGpin}}_*(X;w_2)$, so the result would follow if the canonical map $\MSpin \to \MSGpin$ induced an isomorphism on $\pi_i$ for $i \leq 2$. 
This was claimed as Lemma 3.2 in an earlier version, but is wrong, as was pointed out to us by Diarmuid Crowley, in fact one has $\pi_2(\MSGpin)=0$, whereas $\pi_2(\MSpin) = \Z/2$. 
I wish to thank Diarmuid for explaining to me the following proposition, of which the just described vanishing is the special case $n=2$. To fix notation, we let $\B\G\langle n+1 \rangle = \mathrm{fib}(\B\G \to \tau_{\leq n}\B\G)$ and let $\M\G\langle n+1\rangle$ be the Thom spectrum of the tautological stable spherical fibration on $\B\G\langle n+1\rangle$. For instance, we have $\M\G\langle 1 \rangle = \M\G$, $\M\G\langle 2 \rangle = \mathrm{MSG}$, and $\M\G\langle 3 \rangle = \MSGpin$.
\begin{Prop}
For every $n\geq 1$, we have $\pi_n(\M\G\langle n+1\rangle) = 0$.
\end{Prop}
\begin{proof}
Since $n\geq 1$, we find that the tautological spherical fibration on $\BG\langle n+1\rangle$ is orientable, so the Thom isomorphism shows that 
\[ \H_i(\M\G\langle n+1 \rangle;\Z) \cong \H_i(\B\G\langle n+1 \rangle ;\Z) \]
vanishes for $0 <i \leq n$. Therefore, the Hurewicz theorem implies that the unit map $\S \to \M\G\langle n+1 \rangle$ of the ring spectrum $\M\G\langle n+1\rangle$ induces an isomorphism on $\pi_i$ for $i <n$ and surjection on $\pi_n$. To show the proposition, it therefore suffices to show that the map $\pi_n(\S) \to \pi_n(\M\G\langle n+1 \rangle)$ is also the zero map.
Now, via the isomorphism $\pi_n(\S) \cong \pi_{n+1}(\B\G)$, an element $x$ of $\pi_n(\S)$ determines an associated stable spherical fibration $\xi_x$ over $S^{n+1}$. We shall make use of the following well-known lemma, and will give a proof of it in modern language in \cref{proof-of-lemma}
\begin{Lemma}\label{Lemma:cell-structure-on-Thom-spectrum}
There is a canonical cofibre sequence $\S^n \stackrel{\cdot x}{\lto} \S \lto \M(\xi_x)$ of spectra.
\end{Lemma}
Now we notice that $S^{n+1}$ is $n$-connected, so that the classifying map $\xi \colon S^{n+1} \to \B\G$ lifts to a map $\xi'\colon S^{n+1} \to \B\G\langle n+1 \rangle$, and therefore that the unit map $\S \to \M\G\langle n+1 \rangle$ factors as the composite
\[ \S \lto \M(\xi_x) \lto \M\G\langle n+1 \rangle.\]
Therefore, the above lemma implies that the map $\pi_n(\S) \to \pi_n(\M\G\langle n+1 \rangle)$ is the zero map as needed.
\end{proof}
\end{Warning}

\begin{Rmk}
One can also give a proof of \cref{3-dimensional case} using the geometric approach presented in this section, as was explained to us by Wolfgang L\"uck. More precisely, in analogy to \cref{reduction dim 4} one first shows that it suffices to find a degree $1$ map $f\colon M \to X$ with $f^*(w_1(X)) = w_1(M)$. The twisted Atiyah--Hirzebruch spectral sequence converging to $\Omega_3^{\SO}(X;w_1)$ shows such a map to exist: There are no possible differentials on the fundamental class because $\pi_i(\MSO) = 0$ for $i \in \{1,2,3\}$, see \cite[Section 3]{Cannon-conjecture}.
\end{Rmk}

\section{Further remarks}\label{section4}
\subsection{Non-orientable complexes of dimension 4}
First, we  want to mention that a construction of Hambleton and Milgram gives a non-orientable 4-dimensional PD space whose Spivak normal fibration is not reducible. Indeed, in \cite[section 4]{HM} of loc.\ cit.\ the Spivak normal bundles of spaces called $X^4$ and $X^6$ are discussed. We shall only be interested in $X^4$. 
It is a space obtained from $\RP^2 \vee S^2$ by attaching one 3-cell and one 4-cell, see \cite[line 7 page 1325]{HM}. It follows that, pinching the 2-skeleton of $X$, one obtains a cofibre sequence
\[S^3 \stackrel{\cdot 2}{\lto} S^3 \lto X/X^{(2)} \]
because $X$ is non-orientable and thus must have top cohomology (integrally) isomorphic to $\Z/2$.

We note that the above cell structure implies that $\pi_1(X) \cong \Z/2$, so there is a unique non-trivial real line bundle $\eta_1$ over $X$. It is then shown, see \cite[Corollary 4.3]{HM}, that the Spivak normal fibration of $X$ is a sum of two spherical fibrations, namely $\eta_1 \oplus (\kappa)$ (in their notation) where $(\kappa)$ is a spherical fibration making the diagram
\[ \begin{tikzcd} S^3 \ar[r] \ar[dr, bend right, "e_1"] & X/X^{(2)} \ar[d] & X \ar[l] \ar[dl, bend left, "(\kappa)"'] \\ & \BG \end{tikzcd}\]
commute. Here $e_1$ again denotes the exotic class as considered in \cref{section1}. It follows that $e_1(\kappa) \neq 0$ as both maps 
\[ S^3 \lto X/X^{(2)} \longleftarrow X \]
induce an isomorphism in third mod 2 cohomology. We conclude that $e_1(\SF(X)) \neq 0$: Suppose to the contrary that $e_1(\SF(X)) = 0$. Then there exists a vector bundle reduction of $\eta_1 \oplus (\kappa)$. Since $\eta_1$ is realised by a vector bundle and $\eta_1 \oplus \eta_1$ is trivial (recall that for $\gamma$ the universal line bundle over $\RP^\infty$, one has that $\gamma\oplus \gamma$ is trivial) it would follow that $(\kappa)$ also admits a vector bundle reduction, contradicting $e_1(\kappa) \neq 0$.

We also want to mention that since $\pi_1(X) \cong \Z/2$, the orientation double cover is simply connected and hence has reducible Spivak fibration by \cref{reducibility for pi1 trivial 4 PD spaces} without alluding to the main theorem of this note or \cite{Hambleton}.

\subsection{Relation to surgery theory}\label{subsection-surgery}
As indicated, the question whether or not a PD complex $X$ is reducible is the first obstruction in surgery theory. Indeed, by Sullivan's result \cref{Sullivan}, a reducible PD complex $X$ admits a degree one normal map $M \to X$. One can then try to improve this map, by surgeries on $M$, to become a homotopy equivalence. For instance, if $X$ has odd dimension greater or equal to $5$ and is simply connected, the surgery obstruction groups vanish, and hence $X$ is homotopy equivalent to a closed smooth manifold. 

Contrary to this, Wall \cite{Wall} constructed examples of oriented 4-dimensional PD complexes $X$ with cyclic fundamental group of order $p$ which satisfy 
\[ \sigma(\tilde{X}) \neq p\cdot \sigma(X), \]
where $\sigma(-)$ denotes the signature and $\tilde{X}$ the universal cover of $X$. By \cite{Hambleton} or the main result of this paper we know that $X$ is reducible. But $X$ is not homotopy equivalent to a closed manifold as the signature is multiplicative for finite covers of closed manifolds, essentially since the signature can be expressed in terms of rational Pontryagin classes by Hirzebruch's signature theorem.

\subsection{Higher dimensional complexes}
We now want to show how to use the geometric method of \cref{section3} to show that there exists a 5-dimensional PD complex $X$ which is not reducible. The example is of course well-known and several proofs are possible. 
The complex $X$ is given as follows.
\[ X = (S^2 \vee S^3)\cup_\Theta D^5 \]
where $\Theta = [\iota_2,\iota_3]+\eta^2 \in \pi_4(S^2\vee S^3)$. To be precise, $\eta^2$ refers to the composite 
\[S^4 \stackrel{\eta^2}{\lto} S^2 \lto S^2\vee S^3.\]
It is not hard to check that $X$ satisfies Poincar\'e duality.

Notice that $X$ is stably equivalent to $C(\eta^2) \vee  S^3$ since $\Theta = \eta^2$ stably, and that thus that 
\[\Sq^2\colon \H^3(X;\Z/2) \lto \H^5(X;\Z/2)\] 
is trivial. Since $w_1(X) =0$, the Wu--formula implies that $w_2(X) = v_2(X) = 0$. If $X$ is reducible, then there exists a 5-manifold $M$ mapping by a degree 1 normal map to $X$, as explained in the proof of \cref{reduction dim 4}. In particular, there exists a degree 1 map $f \colon M \to X$ where $M$ is a spin 5-manifold. We will show that no such map exists, in other words
we aim at showing that the canonical map
\[ \Omega_5^{\Spin}(X) \lto \H_5(X;\Z) \]
does not hit the fundamental class of $X$. Looking at the AHSS, we can neglect the $S^3$-summand of $\Sigma^\infty_+X$ by naturality of the differentials. It then suffices to know that the $d_3$-differential
\[ d_3 \colon \H_5(C(\eta^2);\Z) \lto \H_2(C(\eta^2);\Z/2) \]
is non-trivial, as then the generator of $\H_5(X;\Z)$ is not a permanent cycle and hence not in the image of the above canonical map. The non-triviality of the $d_3$-differential follows from the fact that $\eta^2$ is stably essential and detected precisely by the secondary operation given by this $d_3$-differential. Notice that we use here (to some extent) that the unit $\S \to \MSpin$ of the ring spectrum $\MSpin$ is a 3-equivalence, so we can import differentials from the AHSS converging to stable homotopy groups of $C(\eta^2)$.

\begin{Rmk}
Other proofs of the non-reducibility of $X$ that we are aware of explicitly use that the Spivak normal fibration over $X$ is non-trivial. Of course, we use this implicitly here as well: the attaching map of the top cell of an oriented PD space is stably null-homotopic if and only if the Spivak normal fibration is trivial, see for instance \cite[Lemma 3.10]{KLPT} and thus all proofs crucially use the fact that $\eta^2$ is stably essential. 
\end{Rmk}

Let us quickly comment on the obstruction theoretic point of view for this example. As mentioned earlier, stably we have that $X$ is equivalent to $S^3 \vee C(\eta^2)$. Taking the defining cofibre sequence and mapping that to $\BO$ and $\BG$, one shows that the primary obstruction to finding a vector bundle lift (i.e.\ the element $e_1(X) \in \H^3(X;\Z/2)$) is in fact the only obstruction: The diagram implies that the canonical map 
\[ [C(\eta^2),\BO] \lto [C(\eta^2),\BG] \]
is a bijection. It follows that the Spivak fibration of $X$, viewed as an element of 
\[ [X,\BG] \cong \pi_3(\BG) \oplus [C(\eta^2),\BG] \cong \pi_3(\BG) \oplus \pi_2(\BG) \]
has a non-trivial component in $\pi_3(\BG)$. Thus $e_1(X)$ is non-trivial, but as explained earlier, $w_2$ of the Spivak fibration vanishes. Thus all Stiefel--Whitney classes (and Wu classes) of $X$ vanish, but $e_1$ does not.

\appendix

\section{A parametrised homotopy theory view on PD spaces}\label{appendix}

The purpose of this appendix is to give a short treatment of PD complexes in the language of $\infty$-categories.
Most of what we write is already contained in \cite{Klein} and we claim no originality for the material. 
\newline

An important $\infty$-category is the $\infty$-category associated to topological spaces, i.e.\ the localisation of the usual category of topological spaces at the weak homotopy equivalences. This $\infty$-category is traditionally called the $\infty$-category of spaces. It is, however, confusing to do so: Its objects are rather homotopy types than actual topological spaces. Therefore, we follow the recent terminology used in \cite{CS} and call the objects of the $\infty$-category associated to topological spaces \emph{anima} and the $\infty$-category itself the $\infty$-category of \emph{animae}, written $\Spc$.

Furthermore, we denote by $\Sp$ the $\infty$-category of spectra. We recall that $\Sp$ admits a symmetric monoidal structure, the smash product of spectra  which we however write as $\otimes$, uniquely determined by the property that it preserves colimits in each variable and that the suspension spectrum functor $\Sigma^\infty_+ \colon \Spc \to \Sp$ refines to a symmetric monoidal functor (with respect to the cartesian monoidal structure on $\Spc$).
An object in $\Spc$ or $\Sp$ is called a \emph{finite} anima, respectively spectrum, if it is contained in the smallest subcategory of $\Spc$, respectively of $\Sp$, closed under finite colimits and containing the point, respectively the sphere spectrum, and it is called \emph{compact} if it is a retract of a finite object. This agrees with the general notion of compact objects in an $\infty$-category $\C$, which are those objects $X$ for which the functor $\Map_\C(X,-)\colon \C \to \Spc$ preserves filtered colimits. The full subcategory of compact objects of an $\infty$-category $\C$ will be denoted $\C^\omega$.
A finite CW complex $X$ determines a finite anima, and the suspension spectrum functor $\Sigma^\infty_+ \colon \Spc \to \Sp$, by virtue of preserving colimits and sending the point to the sphere spectrum, sends finite anima to finite spectra, and compact anima to compact spectra. We note here that whether or not a compact anima is finite is determined by its finiteness obstruction in the sense of Wall, which vanishes for simply connected anima. From this (or otherwise) one can show that a compact spectrum is finite, but we shall not make use of this fact. With these definitions, every compact object in $\Sp$ is dualisable, and vice versa it turns out that dualisable objects are compact.

\subsection*{Recollection on parametrised spectra}
For an anima $X$, we will consider the stable $\infty$-category $\Fun(X,\Sp)$, which we will refer to as the $\infty$-category of $X$-parametrised spectra. We will denote the mapping spectrum in this category by $\map_X(-,-)$.
The category $\Fun(X,\Sp)$ is endowed with the pointwise symmetric monoidal structure denoted by $\otimes_X$ or simply $\otimes$ if $X$ is understood.
Given a map $f\colon X \to Y$, the restriction functor 
\[ f^*\colon \Fun(Y,\Sp) \lto \Fun(X,\Sp) \]
is canonically symmetric monoidal, and preserves limits and colimits. Therefore it has a right adjoint $f_*$ and a left adjoint $f_!$, given by right and left Kan extension, respectively. In the extreme case $r \colon X \to \ast$ where $Y$ is a point, the functors $r_*$ and $r_!$ simply take a limit and a colimit over $X$, respectively. We define a candidate of an internal mapping object as follows: Given $\F,\g \colon X \to \Sp$, consider the composite
\[ X \lto X\times X \simeq X^\op \times X \stackrel{\F^\op,\g}{\lto} \Sp^\op \times \Sp \stackrel{\map}{\lto} \Sp \]
which is the functor taking a point $x$ to the spectrum of maps $\map_\Sp(\F_x,\g_x)$ with functoriality given by conjugation with morphisms in $X$. We denote this composite by $\hom_X(\F,\g)$. Straight from the definitions, we obtain a canonical equivalence
\[ \hom_X(\F,\hom_X(\g,\h)) \simeq \hom_X(\F\otimes_X \g,\h).\]
In particular, by forming limits over $X$, we obtain an equivalence
\[ r_*\hom_X(\F,\hom_X(\g,\h)) \simeq r_*\hom_X(\F\otimes_X \g,\h).\]
Now we claim that $r_*\hom_X \simeq \map_X$, showing that $\hom_X(\F, -)$ is right adjoint to $\F\otimes_X -$: To show the claim, we recall that the spectrum of maps in a functor category to a stable category can be calculated by an end-formula, i.e.\ we have
\[ \map_X(\F,\g) = \lim\limits_{\Tw(X)} \map_\Sp(\F(x),\g(y))\]
where $\Tw(X)$ is the twisted arrow category of $X$, \cite{GHN}. Since $X$ is a groupoid, we find that the map $\Tw(X) \to X^\op\times X$ is equivalent to the diagonal $X \lto X\times X$. Therefore, the right hand side of the above equivalence is precisely the limit over $X$ of the functor $\hom_X(\F,\g)$ as claimed.

Again straight from the definitions, we see that for a map $f\colon X \to Y$ the functor $f^*$ is compatible with the internal mapping object in the sense that 
\[ f^*\hom_Y(\F,\g) \simeq \hom_X(f^*\F,f^*\g).\]
Symmetric monoidal functors between closed symmetric monoidal categories which are also compatible with the internal mapping objects as just described are also called closed symmetric monoidal functors. It is a completely formal consequence that the left adjoint $f_!$ of $f^*$ then satisfies the projection formula
\[ f_!(\F) \otimes_Y \g \simeq f_!(\F \otimes_X f^*\g).\]

\begin{Lemma}\label{Lemma:compact-index-anima}
Let $X$ be a compact anima. Then the limit functor $\lim_X\colon \Fun(X,\Sp) \to \Sp$ commutes with colimits. Likewise, the colimit functor $\colim_X \colon \Fun(X,\Sp) \to \Sp$ commutes with limits.
\end{Lemma}
\begin{proof}
We prove the first part, the second is analogous. Since $X$ is compact, there exists a finite space $Y$ and maps 
\[ X \stackrel{i}{\lto} Y \stackrel{p}{\lto} X \]
whose composite $pi$ is equivalent to the identity of $X$. Now let $I \to \Fun(X,\Sp)$ be a functor sending $i$ to $\alpha_i$. We note that $i^*$ and $p^*$ commute with colimits, and consider the commutative diagram
\[ \begin{tikzcd}
	\colim\limits_{i\in I} \lim\limits_X i^*p^*\alpha_i \ar[r] \ar[d] & \colim\limits_{i \in I} \lim\limits_Y p^*(\alpha_i) \ar[r] \ar[d] & \colim\limits_{i\in I} \lim\limits_X \alpha_i \ar[d] \\
	\lim\limits_X \colim\limits_{i\in I} i^*p^*(\alpha_i) \ar[r] & \lim\limits_Y \colim\limits_{i\in I} p^*(\alpha_i) \ar[r] & \lim\limits_X \colim\limits_{i \in I} \alpha_i
\end{tikzcd}\]
showing that the right vertical map is a retract of the middle vertical map. Therefore, it suffices to show the lemma for finite $Y$, in which case it follows from the fact that in stable categories, finite limits commute with colimits.
\end{proof}

\begin{Lemma}\label{Lemma:pointwise-compact-is-compact}
Let $X$ be a compact anima. Then there is a canonical inclusion 
\[ \Fun(X,\Sp^\omega) \subseteq \Fun(X,\Sp)^\omega.\]
\end{Lemma}
\begin{proof}
Let $\F \colon X \to \Sp^\omega$ and let $\g \colon \mathscr{I} \to \Fun(X,\Sp)$ be a filtered diagram of objects of $\Fun(X,\Sp)$. We need to show that the canonical map  
\[ \colim_i \map_X(\F,\g_i) \lto \map_X(\F,\colim \g_i) \]
is an equivalence, where $\map_X(-,-)$ denotes the mapping spectrum in the stable $\infty$-category $\Fun(X,\Sp)$. Again, the mapping spectrum can be calculated by an end-formula, and using once more the equivalence of the map $\Tw(X) \to X^\op \times X$ with the diagonal $X\to X \times X$, we have
\[ \map_X(\F,\g) = \lim\limits_{X} \map_\Sp(\F(x),\g(x))\]
Since $X$ is compact, the limit over $X$ commutes with colimits, by \cref{Lemma:compact-index-anima}. In particular we obtain the following chain of equivalences
\begin{align*}
	 \colim_i \map_X(\F,\g_i) & \simeq \colim_i \lim\limits_{\Tw(X)} \map(\F(x),\g_i(y)) \\ 
	 	& \stackrel{\simeq}{\lto} \lim\limits_{\Tw(X)} \colim_i \map(\F(x),\g_i(y)) \\
		& \stackrel{\simeq}{\lto} \lim\limits_{\Tw(X)} \map(\F(x),\colim \g_i(y)) \\
		& \simeq \map_X(\F,\colim \g_i) 
\end{align*}
where we have used that the values $\F(x)$ are compact spectra in the second to last equivalence.
\end{proof}

\begin{Rmk}
For sake of completeness, we record here that if $G$ is a finite group, then there is the reverse inclusion
\[ \Fun(BG,\Sp)^\omega \subseteq \Fun(BG,\Sp^\omega).\]
Here, $BG$ refers to the category with one object and $G$ as endomorphisms. The same holds true for $\Sp$ replaced by the derived category $\mathscr{D}(R)$ of a ring $R$, and in this case, the Verdier quotient (or its idempotent completion, depending on conventions) is equivalent to the stable module category $\mathrm{stmod}^R_G$ of $G$ with respect to $R$ which is an interesting object in modular representation theory.
\end{Rmk}

\subsection*{The dualising spectrum}
As indicated earlier, we will view $\Fun(X,\Sp)$ as a symmetric monoidal $\infty$-category with the pointwise tensor product of spectra and write $\S_X$ for the unit.
The symmetric monoidal functor
\[ r^* \colon \Sp \lto \Fun(X,\Sp) \]
has a left adjoint, $r_!$, and a right adjoint $r_*$, given by forming the colimit and limit of a functor $\F \colon X \to \Sp$, respectively. \cref{Lemma:compact-index-anima} says that if $X$ is compact then (co)limits over $X$ behave like \emph{finite} (co)limits, so that $r_!$ preserves limits and likewise that $r_*$ preserves colimits. 
It follows from general Morita theory that the functor 
\begin{equation}\label{Morita-theory}
\begin{tikzcd}[row sep=tiny] 
	\Fun(X,\Sp) \ar[r] & \Fun^\l(\Fun(X,\Sp),\Sp) \\
	\F \ar[r, mapsto] & r_!(\F \otimes -)
\end{tikzcd}
\end{equation}
is an equivalence of categories, where the superscript L refers to the full subcategory of the functor category consisting of the colimit preserving (or equivalently left adjoint) functors. Hence, for a compact anima $X$, the functor $r_*$ is equivalent to $r_!(D_X\otimes -)$ for some essentially unique $X$-parametrised spectrum $D_X$. In addition, the unit of the adjunction $(r^*,r_*)$ provides a canonical map 
\[ \S \lto r_*r^*(\S) \simeq r_!(D_X\otimes \S_X)) = r_!(D_X)\]
called the Pontryagin--Thom collapse map and denoted by $c_X$, where $\S_X = r^*(\S)$ is the tensor unit of $\Fun(X,\Sp)$.
\begin{Def}
Let $X$ be a compact anima. The object $D_X$ is called the dualising spectrum of $X$ and the map $c_X \colon \S \to r_!(D_X)$ is called the Pontryagin--Thom collapse map.
\end{Def}

Any object $\F \in \Fun(X,\Sp)$ equipped with a map $c\colon \S \to r_!(\F)$
determines a canonical transformation $r_*(-) \Longrightarrow r_!(\F \otimes -)$, by means of the following composite of natural transformations
\begin{align*}
r_*(-) & \simeq \map_X(\S_X,-) \stackrel{\F\otimes-}{\lto} \map_X(\F,\F\otimes -) \\ 
	&\stackrel{r_!}{\lto}  \map(r_!(\F), r_!(\F\otimes -)) \stackrel{c^*}{\lto} \map(\S,r_!(\F\otimes -)) \simeq r_!(\F\otimes-),
\end{align*}
and we say that $c$ exhibits $\F$ as the dualising spectrum $D_X$ of $X$ if this transformation is an equivalence.

Now we expand slightly on the above situation. Namely, we consider the following construction.
\[ \begin{tikzcd}[row sep=tiny]
	\Fun(X,\Sp)^\omega \ar[r] & \Fun^\l(\Fun(X,\Sp),\Sp) \stackrel{\eqref{Morita-theory}}{\simeq} \Fun(X,\Sp)\\
	\cE \ar[r, mapsto] & \map_X(\cE,-) \simeq r_!(T(\cE) \otimes -)
\end{tikzcd}\]
This gives rise to a functor $T\colon (\Fun(X,\Sp)^\omega)^\op \to \Fun(X,\Sp)$ given by sending $\cE$ to $T(\cE)$
and which preserves the objects $i_!(\S)$. Indeed, we have
\[ r_!(i_!(\S) \otimes \F) \simeq r_!i_!(\S \otimes i^*(\F)) \simeq i^*(\F) \simeq \map_X(i_!(\S),\F) \]
by the projection formula. Since the objects $i_!(\S)$ generate $\Fun(X,\Sp)^\omega$, $T$ restricts to a functor
\[ D_{CW} \colon (\Fun(X,\Sp)^\omega)^\op \lto \Fun(X,\Sp)^\omega \]
which comes with natural equivalences
\[ \map_X(\cE,D_{CW}(\F)) \simeq r_!(D_{CW}(\cE) \otimes D_{CW}(\F)) \simeq \map_X(\F,D_{CW}(E)) \]
for $\cE,\F \in \Fun(X,\Sp)^\omega$. From this, one deduces that $D_{CW}$ is adjoint to the opposite of $D_{CW}$. Furthermore, one can see that $D_{CW}$ is an equivalence of categories, and hence a duality, in particular that $D_{CW}^2\simeq \id$. This duality is called Costenoble--Waner duality (hence the notation). We summarise the situation in the following lemma.

\begin{Lemma}
Let $X$ be a compact anima, and let $\cE \in \Fun(X,\Sp)^\omega$ and $\F \in \Fun(X,\Sp)$. Then we have a natural equivalence of functors $\map_X(\cE,\F) \simeq r_!(D_{CW}(\cE)\otimes \F)$. In particular, we have $D_X \simeq D_{CW}(\S_X)$ and a canonical equivalence of functors $\map_X(D_X,-) \simeq r_!(-)\colon \Fun(X,\Sp) \to \Sp$.
\end{Lemma}
\begin{proof}
The first equivalence is valid by definition of Costenoble--Waner duality. Then we note that $\map_X(\S_X,-)$ is equivalent to $r_*(-)$ by adjunction, so $D_X \simeq D_{CW}(\S_X)$ by definition of $D_X$. We then deduce that 
\[ \map_X(D_X,-) \simeq r_!(D_{CW}(D_X)\otimes -) \simeq r_!(D^2_{CW}(\S_X) \otimes -) \simeq r_!(-)\]
since $D^2_{CW}$ is equivalent to the identity and $\S_X$ is the tensor unit of $\Fun(X,\Sp)$.
\end{proof}

As a consequence, we obtain a form of Atiyah-duality, namely an identification of the Spanier--Whitehead dual of the suspension spectrum of a compact anima in terms of its dualising spectrum.
\begin{Lemma}\label{Lemma:general-Atiyah-duality}
Let $X$ be a compact anima. Then there is a canonical equivalence
\[ D(X_+) \simeq r_!(D_X).\]
\end{Lemma}
\begin{proof}
We consider the mapping spectrum $\map_X(\S_X,\S_X)$. Using $\S_X =r^*(\S)$, the adjunction $(r^*,r_*)$ and the defining property of the dualising spectrum, we obtain an equivalence
\[ \map_X(r^*(\S),r^*(\S)) \simeq \map_\Sp(\S,r_*(r^*\S)) = r_!(D_X).\]
On the other hand, using the $(r_!,r^*)$ adjunction, we obtain an equivalence
\[ \map_X(r^*(\S),r^*(\S)) \simeq \map_X(r_!r^*(\S),\S).\]
The lemma then follows from the equivalence $r_!r^*(\S) \simeq \Sigma^\infty_+ X$, whose verification we leave to the reader, and the fact that $DY= \map(Y,\S)$ for spectra $Y$. 
\end{proof}

We recall now that $\Pic(\S)$ denotes the full subgroupoid of $\Sp^\simeq$ on $\otimes$-invertible spectra, i.e.\ spectra equivalent to $\S^n$ for some $n \in \Z$.
We recall that there are inclusions 
\[ \Fun(X,\Pic(\S)) \subseteq \Fun(X,\Sp^\omega) \subseteq \Fun(X,\Sp)^\omega.\]
An object of the full subcategory $\Fun(X,\Pic(\S))$ of $\Fun(X,\Sp)$ is called a spherical fibration.
The dimension of a spherical fibration $\F$ is the (locally constant) function $X \to \Z$ which assigns to a point $x$ the dimension of the sphere $\F_x$.
\begin{Def}\label{Def:PD-space}
A compact anima $X$ is a called a PD complex if its dualising spectrum is a spherical fibration. In this case, its dualising spectrum is also called the Spivak normal fibration of $X$.
The dimension of $X$ is defined to be the negative of the dimension of its Spivak normal fibration.
\end{Def}

\begin{Rmk}
For a spherical fibration $\xi \in \Fun(X,\Pic(\S))$, the spectrum $r_!(\xi)$ is in fact a well-known object: It is the Thom spectrum $\M\xi$ of $\xi$. Therefore, \cref{Lemma:general-Atiyah-duality} says that the Spanier--Whitehead dual of a Poincar\'e duality complex is equivalent to the Thom spectrum of its Spivak normal fibration.
\end{Rmk}

\begin{Rmk}
We recall from \cref{Lemma:pointwise-compact-is-compact} the inclusion
\[ \Fun(X,\Sp^\omega) \subseteq \Fun(X,\Sp)^\omega \]
for a compact anima $X$. The object $\S_X$ is an object of $\Fun(X,\Sp^\omega)$, so that $D_X = D_{CW}(\S_X)$ is always an object of $\Fun(X,\Sp)^\omega$. One may wonder under what conditions $D_X$ is an object of $\Fun(X,\Sp^\omega)$. This is in fact the case if and only if $X$ is a PD complex in the sense of \cref{Def:PD-space}. In other words, $D_X$ is pointwise compact if and only if it is pointwise invertible, as we will show now. To see the non-trivial statement, let us assume that $D_X$ is pointwise compact, and hence pointwise dualisable. Let $D_X^\vee$ denote the pointwise dual of $D_X$. This is then the dual of $D_X$ in the pointwise symmetric monoidal structure on $\Fun(X,\Sp)$. In particular, one has equivalences
\[ r_!(\F) \simeq \map_X(D_X, \F) \simeq \map_X(\S_X,D_X^\vee \otimes \F) \simeq r_*(D_X^\vee\otimes \F) \simeq r_!(D_X\otimes D_X^\vee \otimes \F),\]
natural in $\F$. Therefore, $D_X\otimes D_X^\vee$ is equivalent to $\S_X$, and hence $D_X$ is pointwise invertible as claimed.
\end{Rmk}

Next, we show that a Poincar\'e duality space as defined in the beginning of this docusment is indeed a PD complex in the sense of \cref{Def:PD-space}, see \cref{Prop:same-definitions}.
Before doing so, we state the following recognition principle for the Spivak normal fibration and its consequences for closed manifolds.
\begin{Prop}\label{Prop:recognition-principle-for-Spivak-fibration}
Let $X$ be a finite anima equipped with an orientation local system $\Loc$ and a class $[X] \in \H_n(X;\Loc)$ such that for any other local system $\m$ of abelian groups on $X$, the map 
\[ - \cap [X] \colon \H^k(X;\m) \lto \H_{n-k}(X;\m\otimes \Loc) \]
is an isomorphism\footnote{I.e.\ a PD space as described in the body of this paper.}. Let $\F$ be a spherical fibration over $X$ (of formal dimension $-n$) equipped with a map $c\colon \S \to r_!(\F) = \M\F$. Assume that its $\Z$-linearisation is equivalent to $\Loc[-n]$ such that under the Thom isomorphism
\[ \H_0(\M\F;\Z) \cong \H_n(X;\Loc) \]
the class $[c]$ is sent to the fundamental class $[X]$. Then the map $c \colon \S \to r_!(\F)$ exhibits $\F$ as the Spivak normal fibration of $X$.
\end{Prop}
\begin{proof}
We need to show that for all $X$-parametrised spectra $\g \colon X \to \Sp$, the induced map 
\[ r_*(\g) \lto r_!(\F \otimes \g) \]
is an equivalence. We observe that both functors commute with colimits and limits in $\g$; for the latter this is because $\F \otimes -$ is an equivalence since $\F$ is invertible. Moreover, since $\Fun(X,\Sp)$ is generated by objects of the form $i_!(\S)$ it suffices to check the equivalence for such objects. We note that in this case, both left and right hand side are bounded below spectra as they are given by a finite limit respectively colimits of pointwise bounded below objects. It therefore suffices to show that the map 
\[ r_*(\g) \otimes \Z \lto r_!(\F\otimes \g) \otimes \Z \]
is an equivalence in $\D(\Z)$, the derived $\infty$-category of $\Z$. We now use the commutative diagram (recall that $r_*$ is essentially a finite limit)
\[ \begin{tikzcd}	
	\Fun(X,\Sp) \ar[r,"r_*"] \ar[d,"-\otimes \Z"] & \Sp \ar[d,"-\otimes \Z"] \\
	\Fun(X,\D(\Z)) \ar[r,"r_*"] & \D(\Z) 
\end{tikzcd}\]
And see that it therefore suffices to prove that the map
\[ r_*(\g) \lto r_!((\F\otimes \Z)\otimes \g) \]
is an equivalence for all $\g \in \Fun(X,\D(\Z))$ and $r_*,r_!$ viewed as functors $\Fun(X,\D(\Z)) \to \D(\Z)$. Again, both of these functors commute with limits and colimits, so by the (pointwise) Whitehead and Postnikov towers of $\g$, it suffices to show that the map 
\[ r_*(\m) \lto r_!(\m\otimes \Loc[-n]) \]
is an equivalence for all local systems of abelian groups $\m$ on $X$. But $r_*(\m) = C^*(X;\m)$ and $r_!(\m\otimes \Loc[-n]) = C_*(X;\m \otimes \Loc)[-n]$ are cohomology and homology with local coefficients, and the induced map between them is given by cap product with the class corresponding to $[c]$ under the Thom isomorphism
\[ \H_0(\M\F;\Z) \cong \H_0(\M\Loc;\Z) \cong \H_n(X;\Loc).\]
By assumption, this class is $[X]$, so the map is indeed an equivalence.
\end{proof}

\begin{Cor}
Let $M$ be a a closed manifold. Then the geometric Pontryagin-Thom collapse map $c_M\colon \S \to \M\nu_M$ exhibits the stable normal bundle $\nu_M$ of $M$ as the dualising spectrum of $M$. 
\end{Cor}
\begin{proof}
The orientation local system of $M$ is determined by $w_1(TM)= w_1(\nu_M)$. Therefore, the $\Z$-linearisation of $\nu_M$ is equivalent to the orientation local system of $M$. Furthermore, the Pontryagin--Thom collapse map has geometric degree 1. 
Therefore, under the Thom isomorphism
\[ \H_0(\M\nu;\Z) \cong \H_n(M;\Orloc_M) \]
the class $[c_M]$ corresponds to a generator, which is a fundamental class for $[M]$. Thus, classical Poincar\'e duality for $M$ shows that $M$ is a Poincar\'e duality complex and that the underlying spherical fibration of $\nu_M$ is the Spivak normal fibration on $M$.
\end{proof}

\begin{Prop}\label{Prop:same-definitions}
A finite anima is a PD complex in the sense of \cref{Def:PD-space} if and only if it is one in the classical sense explained in \cref{Section:PD-spaces-classical}
\end{Prop}
\begin{proof}
It is easy to see that a finite PD space in the sense of \cref{Def:PD-space} is also one in the sense of the body of the paper: The orientation local system $\Loc$ is the $\Z$-linearisation of its Spivak normal fibration, and the fundamental class is obtained from the Pontryagin--Thom collapse map. This data satisfies the required properties by the same argument as given in \cref{Prop:recognition-principle-for-Spivak-fibration}.

Conversely, suppose $X$ is a finite anima and a PD space in the sense of the body of the paper. 
By \cref{Prop:recognition-principle-for-Spivak-fibration}, it remains to show that there \emph{exists} a spherical fibration $\F$ over $X$ and a map $\S \to r_!(\F)$ giving rise the fundamental class of $X$ under the Thom isomorphism.
This is precisely the classical existence statement for Spivak normal fibrations of Poincar\'e duality spaces due to Spivak \cite{Spivak}. He gives a concrete construction of it as follows. By assumption, $X$ can be represented as a finite CW-complex, and then also as a finite simplicial complex. Any such complex can be embedded into $\R^n$ for suitably large $n$. One can then thicken up the embedding to a closed codimension zero embedding and therefore obtain a manifold $N$ with boundary (if $X$ is a manifold, think of this as being a closed tubular neighbourhood of the embedded manifold; a space isomorphic to the disk bundle of the normal bundle). This thickening can be chosen to retract onto to given embedding of $X$. In particular, one can restrict this retraction to the boundary $\partial N$ of $N$, and obtain a map $\partial N \to X$. The theorem of Spivak is then that the (homotopy) fibres of this map are (have the homotopy type of) spheres if and only if $X$ satisfies Poincar\'e duality in the sense of \cref{Prop:recognition-principle-for-Spivak-fibration}. See also \cite[Section 5]{CLM} for the details.
\end{proof}

\subsection{A proof of \cref{Lemma:cell-structure-on-Thom-spectrum}}\label{proof-of-lemma}
We finish this appendix with a proof of \cref{Lemma:cell-structure-on-Thom-spectrum}, whose setup we recall here for convenience. To begin, we note that $\B
\G$ and $\B\gl_1(\S)$ are two notations for the same homotopy type. We let $n\geq 1$ and fix an element $\xi \in \pi_{n+1}(\B\gl_1(\S))$ viewed as a stable spherical fibration (of formal rank 0) over $S^{n+1}$ and aim to give a presentation of the Thom spectrum $\M\xi$. For this, we recall that the Thom spectrum is given by the colimit over the functor 
\[ \xi \colon S^{n+1} \lto \B\gl_1(\S) \subseteq \Sp.\]
Writing $S^{n+1}$ as the suspension of $S^n$, we obtain the following pushout describing the colimit of $\xi$ as follows:
\[\begin{tikzcd} 
	\colim\limits_{S^n} \xi_{|S^n} \ar[r] \ar[d] & \S \ar[d] \\
	\S \ar[r] & \colim\limits_{S^{n+1}} \xi = \M\xi
\end{tikzcd}\]
By construction, $\xi_{|S^n}$ is a constant functor, so the colimit evaluates to $S^n_+\otimes \S$. It follows that under the equivalence $\colim\limits_{S^n} \simeq S^n_+ \otimes \S$, the above pushout square is equivalent to the square
\[ \begin{tikzcd}
	S^n_+\otimes \S \ar[r,"\mathrm{pr}"] \ar[d] & \S \ar[d] \\
	\S \ar[r] & \M\xi
\end{tikzcd}\]
where the map $\mathrm{pr}$ is induced by the map $S^n \to \ast$. The left vertical map is, however, \emph{not} simply the projection, rather it corresponds under adjunction to a map $S^n_+ \to \map(\S,\S) = \S$ which happens to land in the subspace $\gl_1(\S)$ and is then induced by the clutching function $S^n \to \gl_1(\S)$ associated to the map $\xi \colon S^{n+1} \to \B\gl_1(\S)$. Therefore, under the isomorphism $\pi_{n+1}(\B\gl_1(\S)) \cong \pi_n(\gl_1(\S)) \cong \pi_n(\S)$, the map $\xi$ corresponds to a map $x\colon S^n \to \S$ and we obtain a pushout 
\[ \begin{tikzcd}
	S^n_+ \otimes \S \ar[r,"\mathrm{pr}"] \ar[d,"x_+"'] & \S \ar[d] \\
	\S \ar[r] & \M\xi
\end{tikzcd}\]
from which we obtain a cofibre sequence
\[ S^n \otimes \S = \S^n \stackrel{\cdot x}{\lto} \S \lto \M\xi \]
as claimed.

\bibliographystyle{amsalpha}
\bibliography{mybib}

\end{document}